\def\@printyear{TT}
\newsavebox\tboxa
\newsavebox\tboxb
\newlength\tdima
\newcommand*{\oversymb}{\mathpalette\@oversymb}
\newcommand*{\@oversymb}[2]{%
    \sbox{\tboxa}{$\m@th#1\mathrm{#2}$}%
    \setbox\tboxb\null%
    \ht\tboxb\ht\tboxa%
    \dp\tboxb\dp\tboxa%
    \wd\tboxb\wd\tboxa%
    \sbox{\tboxa}{$\m@th#1{#2}$}%
    \setlength\tdima{\the\wd\tboxa}%
    \addtolength\tdima{-\the\wd\tboxb}%
    \sbox{\tboxb}{$\m@th#1\hskip\tdima\overline{\xusebox{\tboxb}}$}%
    \rlap{\usebox\tboxb}{\usebox\tboxa}}
\newcommand*{\xusebox}[1]{\mathord{{\usebox{#1}}}}
\newcommand{\C}{\mathbb{C}}
\newcommand{\R}{\mathbb{R}}
\newcommand{\Ps}{\mathbb{P}}
\newcommand{\N}{\mathbb{N}}
\newcommand{\Hermite}{\mathcal{H}}
\newcommand{\trace}{\mathop{\textup{tr}}}
\newcommand{\co}{\colon}
\newcommand{\mult}{\mathop{\textup{mult}}\nolimits}
\newcommand{\Sym}{\mathop{\textup{Sym}}}
\newcommand{\Hom}{\mathop{\textup{Hom}}}
\newcommand{\Stab}{\mathop{\textup{Stab}}}
\theoremstyle{plain}
\newtheorem{theorem}{Theorem}
\newtheorem{corollary}[theorem]{Corollary}
\newtheorem{lemma}[theorem]{Lemma}
\newtheorem{conjecture}[theorem]{Conjecture}
\theoremstyle{remark}
\theoremstyle{definition}
\numberwithin{equation}{section}
\begin{document}

\title{Three-point bounds for energy minimization}

\author{Henry Cohn}
\address{Microsoft Research New England,
One Memorial Drive,
Cambridge, Massachuetts 02142}
\email{cohn@microsoft.com}

\author{Jeechul Woo}
\address{Department of Mathematics,
Harvard University,
Cambridge, Massachusetts 02138}
\email{woo@math.harvard.edu}
\thanks{The second author was supported in part by an internship at
Microsoft Research New England and by a Samsung Scholarship.}

\date{March 2, 2011 and, in revised form, February 24, 2012}
\subjclass[2010]{Primary 05B40, 52A40, 52C17; Secondary 90C22, 82B05}


\maketitle

\vskip -4pt \vglue-12pt
\section{Introduction}

Consider the seven lines connecting opposite vertices of a cube and of
its dual octahedron.  Although the symmetry group does not act
transitively on the lines, they are exceedingly well distributed within
$\R\Ps^2$.  In this paper, we prove that they form a universally
optimal configuration; in other words, they minimize a wide variety of
natural notions of energy. Universal optima are rare, and we show that
this configuration is the largest universal optimum in $\R\Ps^2$.

To prove universal optimality, we use semidefinite programming bounds,
which are a powerful technique for proving bounds in coding theory.  We
give a new derivation of these bounds and extend them from coding to
energy minimization.  Proving universal optimality involves challenges
that have not arisen in previous applications of semidefinite
programming bounds, and we provide a general methodology for meeting
these challenges.  Furthermore, we conjecture that in certain other
cases our bounds remain sharp throughout a phase transition between two
different ground states, which would be a remarkable phenomenon.  We
have not yet been able to prove these conjectures, but the techniques
we introduce here represent the first steps in a program to do so.

\subsection{Background}

What does it mean to distribute $N$ points as uniformly as possible in
a compact metric space $X$ with metric $d$?  There are many possible
answers, such as forming a good error-correcting code, i.e., maximizing
the distance between the closest points.  One particularly important
family of answers generalizing coding theory is given by potential
energy minimization. Given a decreasing, continuous function $f \co
\big(0,\max_{x,y \in X} d(x,y)^2\big] \to \R$, called the
\emph{potential function}, define the \emph{energy} of a finite
configuration $\mathcal{C} \subseteq X$ by
$$
E_f(\mathcal{C}) = \frac{1}{2}
\sum_{\shortstack[c]{$\scriptstyle x,y \in \mathcal{C}$\\
$\scriptstyle x \ne y$}}
f\big(d(x,y)^2\big).
$$
Note that the use of squared distance instead of distance is not
standard in physics, but it is mathematically convenient (and of course
there is no loss of generality). We wish to choose $\mathcal{C}$ so as
to minimize $E_f(\mathcal{C})$ subject to $|\mathcal{C}|=N$. Because
$f$ is decreasing, this amounts to moving the points as far apart as
possible, but all the distances matter, not just the minimal distance.

This energy minimization problem arises naturally in physics, as the
problem of determining the ground state of a classical particle system
with isotropic pair interactions.  Even in the case of particles
confined to two-dimensional surfaces, these models capture important
features of many real-world materials, such as colloidal particles
adsorbing to the surface of a droplet (see \cite{BG} for details and
other examples).

Small systems frequently display beautiful symmetry, such as twelve
particles on a sphere forming the vertices of an icosahedron, but in
larger systems the symmetry is often broken by the appearance of
defects.  One might expect that these defects would occur only in local
minima for energy, and that the global minimum would have a perfect
crystalline structure, with a large symmetry group. Sometimes that is
the case, but in many systems the defects actually contribute to
lowering energy. (See for example the discussion of spherical crystals
in Section~3 of \cite{BG}.) One of the fundamental problems in this
area is understanding when highly symmetrical configurations are
optimal and why; see \cite{C} for a survey. Even when the answer is
easy to guess, it is usually not easy to prove, and guessing the answer
can itself be tricky.

Energy minimization can also be viewed as a generalization of coding
theory.  Specifically, it includes coding theory as a degenerate
special case: if we take $f(r) = 1/r^s$, then as $s \to \infty$, the
problem of minimizing $E_f(\mathcal{C})$ turns into the problem of
maximizing the minimal distance in $\mathcal{C}$.  (Ties are broken by
how many times this distance occurs, then what the next smallest
distance is, then how many times it occurs, etc.) The same problem of
understanding symmetry occurs here as well: when should one expect an
optimal code to be highly symmetrical? Large or high-dimensional codes
are frequently much less highly structured than small codes.

Linear programming bounds are a powerful technique for proving lower
bounds for energy.  In the simplest cases, they deal with the
\emph{distance distribution} of the configuration (in physics terms,
the radial pair correlation function), which counts how many times each
distance occurs between pairs of points. In other words, the distance
distribution of $\mathcal{C}$ is the function $\delta \co [0,\infty)
\to \R$ defined by
$$
\delta(r) =
\left|\{x,y \in \mathcal{C} : d(x,y) = r\}\right|.
$$
Clearly $\delta(0)=|\mathcal{C}|$, $\delta(r) \ge 0$ for all
$r$, and $\sum_r \delta(r) = |\mathcal{C}|^2$ (note that only
finitely many terms are nonzero), but we will see that these
are far from the only constraints on $\delta$.  The distance
distribution plays a key role because energy is a linear
function of $\delta$:
$$
E_f(\mathcal{C}) = \frac{1}{2}\sum_{r > 0} f\big(r^2\big)\, \delta(r).
$$
In contrast with its energy, the underlying configuration $\mathcal{C}$
is not always determined by $\delta$ (see \cite{Pa}).

Delsarte \cite{D} realized that in addition to the obvious constraints
on $\delta$ mentioned above, there are often many other linear
constraints.  Using these constraints, one can derive bounds on energy
via linear programming, because one is optimizing a linear function of
$\delta$ subject to linear constraints. This was first carried out for
energy minimization by Yudin \cite{Y}. We refer to them as
\emph{two-point bounds} because they depend only on the distances
between pairs of points.  Linear programming bounds are generally not
sharp, or even close to sharp, at least as best we can judge based on
the available evidence.  However, in certain cases they are
unexpectedly powerful. For example, Cohn and Kumar \cite{CK} used
linear programming bounds to prove that a number of exceptional
structures in spheres or projective spaces are universally optimal.  In
other words, these configurations minimize energy for all completely
monotonic functions of squared Euclidean distance on spheres or squared
chordal distance in projective space. (Recall that a completely
monotonic function is a smooth, nonnegative function whose derivatives
alternate in sign: it is decreasing, convex, etc.  For example, inverse
power laws are completely monotonic. See \cite[p.~101]{CK} for
motivation for the use of squared chordal distance, and
\cite[pp.~107--108]{CK} for counterexamples to a natural
generalization.) Examples include the vertices of any regular polytope
with simplicial facets, the $E_8$ root system, or the minimal vectors
in the Leech lattice.

It is surprising that linear programming bounds are ever sharp, because
when they are sharp, pair distance information alone suffices to
identify the true ground state.  As one might expect, that is very
rarely the case. To prove stronger bounds, it is natural to try to take
into account triples as well as pairs (i.e., how many times each
triangle of distances occurs between three points), and Schrijver
\cite{Sch} found the right approach.  The constraints are no longer
linear, but rather semidefinite.  Bachoc and Vallentin \cite{BV1}
developed a representation-theoretic explanation and extended the
method from binary codes to spheres, with an approach that applies also
to more general spaces, and this method was further developed by Musin
\cite{M}. These three-point semidefinite programming bounds are one of
the most powerful general tools known for proving bounds in coding
theory. Using them, Bachoc and Vallentin \cite{BV3} determined the
optimal $10$-point code in $S^3$, which was the first new optimality
proof for a spherical code in decades.  They also conjectured that the
bounds are sharp for the optimal $8$-point code in $S^2$ (a square
antiprism, first proved optimal by Sch\"utte and van der Waerden
\cite{SvdW}, so there was less motivation to verify the sharpness in
this case).

\subsection{Our results}

In this paper, we give a new derivation of the semidefinite
programming bounds, which has the advantage of requiring no
explicit calculation beyond what is necessary for the linear
programming bounds.  We then prove semidefinite bounds for
potential energy minimization.  Using three-point bounds, we
prove universal optimality for a seven-point code in $\R\Ps^2$.
It is given by the seven lines through opposite vertices of a
cube and its dual octahedron.  Equivalently, the lines connect
opposite vertices of a rhombic dodecahedron.  In the dual
picture of planes through the origin in $\R^3$, the planes are
parallel to the facets of a cuboctahedron (the dual polyhedron
to the rhombic dodecahedron).

\begin{theorem} \label{theorem:univopt}
The rhombic dodecahedron code is universally optimal in $\R\Ps^2$. It
is globally minimal for energy for each completely monotonic potential
function of squared chordal distance, and it is the unique global
minimum unless the potential function is a linear function.  It is also
the unique optimal seven-point code in $\R\Ps^2$.
\end{theorem}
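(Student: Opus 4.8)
The plan is to deploy the three-point semidefinite programming bound for energy that we develop in this paper, specialized to $\R\Ps^2$ with squared chordal distance, and to exhibit an exact dual certificate that is feasible for every completely monotonic potential $f$. The first step is bookkeeping about the configuration $\mathcal{C}$: its lines are spanned by $e_1,e_2,e_3$ and by $(\pm1,\pm1,\pm1)/\sqrt{3}$, its automorphism group is the symmetry group of the cube, and the squared inner products $\langle u,v\rangle^2$ between distinct lines take only the values $0$, $1/9$, and $1/3$, so the squared chordal distances realized are $1$, $8/9$, and $2/3$. Crucially, the local distance distribution seen from a coordinate line differs from that seen from a diagonal line, which is exactly why a two-point bound cannot be sharp and a genuine three-point bound is needed. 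The goal is then, for each completely monotonic $f$, to produce a polynomial $h$ whose coefficients in the zonal (Jacobi) expansion for $\R\Ps^2$ are nonnegative beyond the constant term, together with positive semidefinite matrices $F_k$ assembling a valid three-point correction, so that the resulting auxiliary function $H$ of a pair of points satisfies $H(x,y)\le f\big(d(x,y)^2\big)$ everywhere, with the appropriate side condition at $x=y$, and is tight at exactly the pairs and triples occurring in $\mathcal{C}$; summing over $\mathcal{C}$ and invoking feasibility yields $E_f(\mathcal{C})\le E_f(\mathcal{C}')$ for every seven-point $\mathcal{C}'$. Because of the derivation presented here, the three-point ingredients are built directly from the same Jacobi polynomials used for the two-point bound, which keeps the whole certificate explicit.

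To build the certificate for a general $f$ I would follow a Hermite-interpolation strategy adapted to three points, in the spirit of Cohn and Kumar: choose the zonal part of $H$ to osculate $f$ at the squared chordal distances of $\mathcal{C}$, and choose the three-point matrices so that $H$ acquires the additional zeros forced by complementary slackness along the locus of triangles that appear in $\mathcal{C}$. Because $\mathcal{C}$ has so few inner products and so much symmetry, the three-point matrices decompose into small blocks and the interpolation conditions reduce to a finite linear system; solving it leaves a low-dimensional family of candidate certificates, within which one must locate genuinely feasible members. The ingredient that lets one treat the entire cone of potentials uniformly is complete monotonicity itself: the interpolation remainder has a sign dictated by the nonnegativity of a high-order derivative of $f$, and, since on the finitely many distances that are actually relevant the cone of completely monotonic functions is generated by its extreme rays (exponentials $t\mapsto e^{-\lambda t}$, or suitable powers of $a-t$), it is enough to verify feasibility of the construction there. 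One must also check that truncating the a priori infinite zonal expansion at a bounded degree loses nothing for this particular configuration.

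I expect the principal obstacle to be the pointwise inequality $H(x,y)\le f\big(d(x,y)^2\big)$ over the full two-dimensional domain of admissible triples of points --- not merely at the interpolation nodes --- holding simultaneously for every completely monotonic $f$. In the two-point case this reduces to the sign of a one-variable polynomial remainder, but here the positive semidefinite constraints couple the free parameters of the certificate and the inequality lives on a genuinely multivariate semialgebraic set, so one needs a clean, rigorously checkable positivity certificate (such as a sum-of-squares representation on the relevant domain) that is robust across the cone of potentials. This is precisely the point at which the new methodology of the paper is required, and carrying it out rigorously rather than merely numerically is the heart of the argument.

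For the uniqueness statements I would run the equality analysis. Complementary slackness forces any energy-minimizing seven-point code to realize only distances where $H$ meets $f$ and to lie in the kernels of the relevant semidefinite matrices, which pins down both its distance distribution and its three-point distribution to agree with those of $\mathcal{C}$; a rigidity argument then reconstructs the code up to an isometry of $\R\Ps^2$, provided $f$ is not linear. The exclusion of linear $f$ is genuine: for $f(t)=a+bt$ the energy equals a constant plus a nonnegative multiple of $\trace\big(\big(\sum_i P_i\big)^2\big)=\big\|\sum_i P_i\big\|^2$, where $P_i$ is the projection onto the $i$-th line, and this quantity is minimized by every tight frame of seven unit vectors --- a positive-dimensional family --- so the minimizer is far from unique there, while for strictly completely monotonic $f$ the higher-order slackness conditions eliminate all such competitors. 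Finally, the optimality and uniqueness of $\mathcal{C}$ as a code follow by taking $f(t)=t^{-s}$ and letting $s\to\infty$: such potentials penalize the minimal distance so heavily that a minimizer of $E_f$ must be an optimal packing, and in the limit the bound reproduces a sharpening of the semidefinite programming bound for codes, whose equality analysis shows that no seven-point code in $\R\Ps^2$ has minimal squared chordal distance exceeding $2/3$ and that equality forces the rhombic dodecahedron code.
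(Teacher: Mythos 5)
Your outline follows the same overall strategy as the paper (dual three-point SDP certificates, a Hermite-interpolation reduction, complementary slackness for uniqueness), but there are two genuine gaps. First, the finite reduction is not actually achieved as you describe it. You propose to verify feasibility on the extreme rays of the cone of completely monotonic functions (exponentials, or powers of $a-t$), but that is still an infinite family, and a certificate that ``osculates $f$'' for a general $f$ would have to be constructed uniformly in $f$, which is exactly the difficulty you are trying to avoid. The step that makes the problem finite is to observe that the Hermite interpolant $\Hermite_T(f)$ at a suitable multiset $T$ (here $T=\{0,0,0,1/9,1/9,1/3,1/3\}$) is a \emph{nonnegative} linear combination of the partial products $\prod_{i\le m}(t-t_i)$, so it suffices to prove the sharp energy bound for the seven specific polynomials $1$, $t$, $t^2$, $t^3$, $t^3(t-1/9)$, $t^3(t-1/9)^2$, $t^3(t-1/9)^2(t-1/3)$ --- a finite list, each member of which gets its own exact certificate. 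Without this lemma (or an equivalent finite generating set for an enlarged cone), your plan does not terminate. Second, the ``principal obstacle'' you identify --- producing exact, rigorously feasible certificates rather than numerical ones --- is left entirely unresolved, and it is not routine: one must perturb each potential downward (e.g.\ by subtracting a small multiple of $t^3(t-1/9)^2(t-1/3)^2$) so that the tightness locus becomes finite, impose the complementary-slackness and tangency conditions as exact linear constraints, and round a numerical SDP solution within the resulting affine subspace so that no forced zero eigenvalue goes negative. The strictness created by the perturbation is also what powers the uniqueness argument, which your sketch omits.

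Two smaller points. Your derivation of code-optimality and code-uniqueness by sending $f(t)=t^{-s}$ to the $s\to\infty$ limit is weaker than needed: the limit shows no code can have strictly larger minimal distance, but a competitor with \emph{equal} minimal distance need not minimize $E_f$ for large $s$, so uniqueness as a code does not follow. The clean argument is to use the single potential $f(t)=t^3(t-1/9)^2(t-1/3)$, which is nonpositive on $[0,1/3]$ and vanishes at $0,1/9,1/3$; hence every code with minimal distance at least that of the rhombic dodecahedron has $\widehat{E}_f\le 0$, is therefore an energy minimizer for this $f$, and uniqueness for this one potential yields uniqueness as a code. Finally, your ``rigidity argument'' for uniqueness needs content: the actual route is to solve the complementary-slackness linear system for the multiplicities of the five allowed triangle types, deduce that exactly one orthogonal triple occurs, and then show the remaining four lines are forced onto the cube diagonals. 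Your observation about linear potentials and tight frames is correct and matches the paper's remark.
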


It is straightforward to check that two-point bounds cannot
prove Theorem~\ref{theorem:univopt}.  It was already known that
this configuration is an optimal projective code (i.e., it
maximizes the minimal distance), as a consequence of the
orthoplex bound of Conway, Hardin, and Sloane \cite{CHS}.
Uniqueness was conjectured in \cite{CHS} but does not follow
from the orthoplex bound; see Appendix~\ref{appendix:orthoplex}
for an explanation.

Note that the uniqueness asserted in Theorem~\ref{theorem:univopt} is
as strong as possible: for linear potential functions, uniqueness fails
because one can rotate the cube and octahedron relative to each other
(so they are no longer in dual position) without changing the energy.
That can be checked directly, but conceptually it holds because both
polyhedra already define projective $1$-designs.  More generally,
$k$-designs automatically minimize potential energy for completely
monotonic polynomials of degree at most $k$.

One noteworthy aspect of the rhombic dodecahedron code is that it is
the first case in which linear or semidefinite programming bounds are
sharp for a code that is not distance regular.  In other words, there
is a distance such that the points in the code do not all have the same
number of neighbors at that distance.  All universal optima known or
conjectured so far have been distance regular, so this example is the
least regular one known.  Theorem~\ref{theorem:univopt} thus extends
the types of configurations that can be analyzed rigorously.

We have searched extensively for other cases in which the three-point
semidefinite programming bounds are sharp (in spheres, real projective
spaces, and complex projective spaces), but the rhombic dodecahedron is
the only new case we have found.  It would be very surprising if it
were the only example in projective space, and we think there must be
others. However, our failure to find them suggests that they are either
large or high-dimensional.  In the process of searching for other
examples, we have formulated several conjectures about semidefinite
programming bounds that would be remarkable if true
(Conjectures~\ref{conjecture:petersen} through~\ref{conjecture:univ}):
in the few sharp cases that are known, the bounds appear to remain
sharp even all the way through a phase transition between two different
structures.

Theorem~\ref{theorem:univopt} provides the last remaining universal
optimum in $\R\Ps^2$:

\begin{theorem} \label{theorem:classify}
The complete list of universally optimal line configurations in $\R^3$
(up to isometry) is as follows:
\begin{enumerate}
\item Up to three orthogonal lines. \label{case:trivial}

\item The four lines through opposite vertices of a cube.
    \label{case:cube}

\item The six lines through opposite vertices of an icosahedron.
    \label{case:icosa}

\item The seven lines through opposite vertices of a rhombic
    dodecahedron. \label{case:new}
\end{enumerate}
\end{theorem}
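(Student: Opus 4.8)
\noindent\emph{Proof proposal.} The plan is to combine Theorem~\ref{theorem:univopt} with two easy necessary conditions for universal optimality and a case analysis on the number $N$ of lines. A universal optimum is automatically an \emph{optimal code}: taking $f(r)=1/r^s$ and letting $s\to\infty$ forces any configuration that minimizes $E_f$ for all completely monotonic $f$ to maximize the minimal chordal distance. It is also automatically a \emph{tight frame}, i.e.\ a projective $1$-design, since it minimizes $E_f$ also for $f(r)=(1-r)+\varepsilon(1-r)^2$ for all small $\varepsilon>0$, and the linear part of that energy equals, up to additive and multiplicative constants, $\sum_{u\ne v}\langle u,v\rangle^2=\|\sum_u uu^{\mathsf T}\|^2-N$, whose minimum over $N$-point configurations in $\R\Ps^2$ is attained (for $N\ge 3$) exactly when $\sum_u uu^{\mathsf T}=\tfrac N3 I$. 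The positive direction is then quick: the cases $N\le 2$ are trivial, and three orthogonal lines, the four cube diagonals at $\arccos\tfrac13$, and the six icosahedral diagonals at $\arccos\tfrac1{\sqrt5}$ are universally optimal by \cite{CK}, being the line systems that embed as regular $2$-, $3$-, and $5$-simplices in a sphere inside the five-dimensional space of traceless symmetric $3\times 3$ matrices under $u\mapsto uu^{\mathsf T}-\tfrac13 I$, a map that multiplies squared chordal distance by a constant; the rhombic dodecahedron code is Theorem~\ref{theorem:univopt}.

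For the converse --- that there are no others --- I would handle $3\le N\le 7$ through the tight-frame condition. An elementary Gram-matrix computation shows that a tight frame of $N$ lines in $\R^3$ has Gram matrix $\tfrac N3(I-Q)$ with $Q$ a rank-$(N-3)$ orthogonal projection whose diagonal entries all equal $(N-3)/N$; hence its off-diagonal inner products are, after dividing by the fixed factor $(N-3)/3$, the off-diagonal inner products of a tight frame of $N$ unit vectors in $\R^{N-3}$ (a Naimark complement). For $N=3$ this forces three orthogonal lines. For $N=4$ the complement lies in $\R^1$, forcing $|\langle u,v\rangle|=\tfrac13$ for every pair, hence the cube diagonals, the unique four equiangular lines at that angle. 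For $N=5$ the complement lies in $\R^2$, where any five lines contain a pair at angle at most $36^\circ$, so the original configuration has a pair with $|\langle u,v\rangle|\ge\tfrac23\cos 36^\circ=\tfrac{1+\sqrt5}{6}>\tfrac1{\sqrt5}$; since five of the six icosahedral diagonals are pairwise at $\arccos\tfrac1{\sqrt5}$, no five-line tight frame is an optimal code, and hence no five-line universal optimum exists. For $N=6$ the complement again lies in $\R^3$; a six-line tight frame has average squared inner product $\tfrac15$, hence minimal angle at most $\arccos\tfrac1{\sqrt5}$ with equality only in the equiangular case, which the icosahedral diagonals realize, so a six-line configuration that is both a tight frame and an optimal code must be the icosahedral diagonals. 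The case $N=7$ is exactly Theorem~\ref{theorem:univopt}, which also supplies uniqueness of the optimal seven-line code.

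The remaining range $N\ge 8$ is the main obstacle. Here the Naimark complement lies in $\R^{N-3}$, too roomy for the planar pigeonhole argument to give anything, and the optimal codes are not classified, so one must rule out a priori --- uniformly in $N$ --- the existence of a configuration that is simultaneously an optimal code and a tight frame minimizing $E_f$ for every completely monotonic $f$. My plan would be to play the orthoplex bound of Conway, Hardin, and Sloane \cite{CHS}, which forces every optimal $N$-line code with $N\ge 8$ to have minimal squared chordal distance strictly below $\tfrac23$, against the tight-frame identity $\sum_{u\ne v}\langle u,v\rangle^2=\tfrac13 N(N-3)$ and against sharpened linear and three-point semidefinite programming bounds for the potentials $f(r)=(1-r)^k$ --- all of which a universal optimum must make extremal simultaneously --- so as to obtain a contradiction. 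Carrying this out for \emph{every} $N\ge 8$ without case-by-case packing data is the crux, and it is precisely where the three-point techniques developed in this paper, rather than classical two-point bounds, should be needed.
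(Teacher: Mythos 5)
Your positive direction matches the paper (trivial cases, Theorem~8.2 of \cite{CK} for the cube and icosahedron, Theorem~\ref{theorem:univopt} for the rhombic dodecahedron), and your Naimark-complement analysis for $3\le N\le 7$ is a reasonable, essentially elementary way to dispose of those cases. But the completeness claim for $N\ge 8$ is a genuine gap, and you acknowledge as much: what you offer there is a plan, not an argument, and the plan is not viable as stated. The orthoplex bound for $\R\Ps^2$ applies only in the window $n(n+1)/2 < N \le n(n+1)-2$, i.e.\ $7\le N\le 10$ lines, so it says nothing for $N\ge 11$; the optimal $N$-line codes are unknown for general $N$; and ``optimal code plus tight frame'' is far too weak a pair of necessary conditions to rule out universal optima uniformly in $N$. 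Since the theorem asserts completeness for \emph{all} $N$, the proof is missing its central ingredient.

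The paper's route (Appendix~\ref{appendix:rp2}) uses a much stronger necessary condition than the projective $1$-design property: a universal optimum in $\R\Ps^2$ lifts to an antipodal configuration in $S^2$ that is \emph{balanced}, i.e.\ stationary for every pairwise force law, and Leech \cite{Lee} classified all balanced configurations in $S^2$. They are exactly the equatorial rings (with or without the poles) and the vertex/edge/face combinations of the regular polyhedra. This converts the infinite family of cases $N\ge 8$ into a short finite list at one stroke. The rings are eliminated by perturbing one point off the equator for a steep potential; the finitely many polyhedral candidates (Table~\ref{table:polyhedra}) are eliminated because each is beaten as a projective code by a record from \cite{CHS} --- except the $16$-line configuration (icosahedron vertices plus face centers), which is instead eliminated because it is a projective $2$-design but not a $3$-design while a $16$-line $3$-design exists, so it loses for a cubic completely monotonic potential. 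If you want to complete your proof you need some substitute for this balancedness-plus-classification step; without it, the cases $N\ge 8$ remain open.
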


Case \eqref{case:trivial} is trivial, while cases \eqref{case:cube} and
\eqref{case:icosa} follow from Theorem~8.2 in \cite{CK}.  Case
\eqref{case:new} is Theorem~\ref{theorem:univopt}, and the completeness
of the list is proved in Appendix~\ref{appendix:rp2}.

\subsection{Notation and definitions} \label{subsec:notation}

A \emph{code} is simply a finite subset of a metric space.  It is
\emph{optimal} if it maximizes the minimal distance between points,
given the number of points and the metric space.

An $(n,N,t)$ \emph{spherical code} is a set of $N$ points in the unit
sphere $S^{n-1}$ such that the maximal inner product between distinct
points is at most $t$.  (In other words, the minimal angle between them
is at least $\cos^{-1} t$.)  An \emph{antipodal code} is a spherical
code that is closed under multiplication by $-1$.  Real projective
codes are of course equivalent to antipodal codes, and the rhombic
dodecahedron code corresponds to a $(3,14,1/\sqrt{3})$ antipodal code.
(Note that the vertices of the rhombic dodecahedron do not lie on a
common sphere, so they must be scaled to form this code.)

The \emph{chordal metric} is defined on real, complex, or
quaternionic projective space as follows (in the octonionic
case it is defined using the Frobenius norm on the exceptional
Jordan algebra; see \cite[p.~130]{CK}). If we represent points
in projective space by unit vectors, then the chordal distance
between $x$ and $y$ is $\sqrt{1-|\langle x,y \rangle|^2}$,
where $\langle x,y \rangle$ is the Hermitian inner product.
This metric, first introduced in \cite{CHS}, is equivalent to
the Fubini-Study metric but is in many ways more convenient.

A code in a sphere or projective space is \emph{universally optimal} if
it minimizes the energy $E_f$ for all completely monotonic potential
functions $f$ (compared to all other codes of the same size), with the
metric chosen to be the Euclidean metric on spheres or the chordal
metric on projective spaces. Recall that distance is squared in the
definition of $E_f$. This strengthens the notion of universal
optimality, compared to defining it with unsquared distance, and it
improves the connections with topics such as spherical or projective
designs.  (See \cite[p.~101]{CK}.)

Let $P^n_k(t)$ denote the degree $k$ ultraspherical (i.e., Gegenbauer)
polynomial for $S^{n-1}$, normalized with $P^n_k(1)=1$.  (This is not
the most common notation.) These polynomials are orthogonal with
respect to the measure $(1-t^2)^{(n-3)/2}dt$ on $[-1,1]$.  See
Chapters~6 and~9 of \cite{AAR} for more details.

In the literature, two-point bounds are typically referred to as linear
programming bounds and three-point bounds as semidefinite programming
bounds.  These names are a reasonable reference to the underlying
inequalities, but we propose calling them $k$-{\emph{point bounds}}, which
focuses attention on the most geometrically relevant feature, namely
the number of points being simultaneously considered.

Given a matrix $A$, we write $A \succeq 0$ if $A$ is Hermitian (i.e.,
it equals its own conjugate transpose) and positive semidefinite.  The
inner product on Hermitian matrices is defined by $\langle A,B \rangle
= \trace (A \oversymb{B})$ (i.e., the entry-by-entry Hermitian inner
product), and angle brackets will also denote the usual inner product
on vectors. We will use $J$ to denote a matrix with all entries $1$.

When we deal with points in real projective space $\R\Ps^{n-1}$, we
will represent each point by an arbitrary lift to the double cover
$S^{n-1}$. That introduces sign ambiguities, so we must ensure that all
formulas are invariant under the choice of the lift.  For example, if
we apply a function to the inner product between two points, then it
should be an even function.

\section{Linear and semidefinite programming bounds} \label{sec:lpsdp}

\subsection{Positive-definite kernels} \label{subsec:pdk}

Linear and semidefinite programming bounds are based on the theory of
positive-definite kernels. Let $G$ be a topological group acting
continuously on a topological space $X$.  Call a continuous function $K
\co X \times X \to \C$ a \emph{positive-definite kernel}\footnote{In an
ideal world, we would call $K$ a positive-semidefinite kernel, but
unfortunately this terminology is too well established to be easily
changed.} if for all $N \in \N$ and all $x_1,\dots,x_N \in X$, the $N
\times N$ matrix $(K(x_i,x_j))_{1 \le i,j \le N}$ is Hermitian and
positive semidefinite.  (Of course, saying it is Hermitian simply means
that $K(x,y) = \overline{K(y,x)}$ for all $x,y \in X$.)  We call the
kernel \emph{$G$-invariant} if $K(gx,gy) = K(x,y)$ for all $g \in G$
and $x,y \in X$.

Given any unitary representation $V$ of $G$ with inner product $\langle
\cdot, \cdot \rangle$ and any continuous map $\varphi \co X \to V$ that
is $G$-equivariant (i.e., $\varphi(gx) = g \varphi(x)$ for $g \in G$
and $x \in X$), setting $K(x,y) = \langle \varphi(x), \varphi(y)
\rangle$ defines a $G$-invariant positive-definite kernel on $X$. In
fact, every such kernel arises in this way:

\begin{theorem} \label{theorem:pdk}
For every $G$-invariant positive-definite kernel $K$ on $X$, there
exists a unitary representation $V$ of $G$ and a continuous,
$G$-equivariant map $\varphi \co X \to V$ such that $K(x,y) = \langle
\varphi(x), \varphi(y) \rangle$ for all $x,y \in X$.
\end{theorem}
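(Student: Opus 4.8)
The plan is to construct $V$ as a space of functions on $X$ and realize $\varphi$ as a kind of "evaluation in reverse." This is the standard GNS-type / reproducing kernel Hilbert space construction, adapted so that the group action is respected. Concretely, I would start with the vector space $V_0$ spanned by the functions $K_x \co X \to \C$ defined by $K_x(y) = K(y,x)$, and equip it with the inner product determined on spanning vectors by $\langle K_x, K_y \rangle = K(x,y)$. The positive-definiteness hypothesis on $K$ is exactly what makes this a well-defined positive semidefinite Hermitian form: for any finite linear combination $\sum_i c_i K_{x_i}$, its squared norm is $\sum_{i,j} \overline{c_i} c_j K(x_i,x_j) \ge 0$ since the matrix $(K(x_i,x_j))$ is positive semidefinite.

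**Next I would** quotient out the null space $\{v \in V_0 : \langle v,v\rangle = 0\}$ (which by Cauchy–Schwarz is also $\{v : \langle v, V_0\rangle = 0\}$, hence a subspace) to get a genuine inner product space, and then take the completion to obtain a Hilbert space $V$. Define $\varphi \co X \to V$ by $\varphi(x) = K_x$ (more precisely, the image of $K_x$ in the quotient-completion). Then $\langle \varphi(x), \varphi(y)\rangle = K(x,y)$ by construction. The group $G$ acts on $V_0$ by $(g \cdot F)(y) = F(g^{-1} y)$; one checks $g \cdot K_x = K_{gx}$ directly, and $G$-invariance of $K$ gives $\langle g \cdot K_x, g \cdot K_y\rangle = \langle K_{gx}, K_{gy}\rangle = K(gx,gy) = K(x,y) = \langle K_x, K_y\rangle$, so the action is unitary on $V_0$, descends to the quotient, and extends to the completion $V$. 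Equivariance $\varphi(gx) = K_{gx} = g \cdot K_x = g\varphi(x)$ is then immediate.

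**The points requiring care** are the topological ones: continuity of $\varphi$, continuity of the $G$-action on $V$ (i.e., that we really get a representation in the topological sense, with $G \times V \to V$ continuous, or at least strong continuity), and the fact that the action extends to the completion. Continuity of $\varphi$ follows from $\|\varphi(x) - \varphi(x')\|^2 = K(x,x) - K(x,x') - K(x',x) + K(x',x')$ together with continuity of $K$. For the action: since $\|g \cdot F\| = \|F\|$ for $F \in V_0$, the operators are isometric on a dense subspace and extend to unitaries on $V$; strong continuity of $g \mapsto g \cdot v$ reduces, by density and the uniform bound $\|g\| = 1$, to checking it on the $K_x$, where $\|g \cdot K_x - K_x\|^2 = \|K_{gx} - K_x\|^2 \to 0$ as $g \to e$ by continuity of $K$ and of the action of $G$ on $X$.

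**The main obstacle I anticipate** is purely bookkeeping rather than conceptual: making sure the quotient-then-complete construction is compatible with the $G$-action at each stage (well-definedness on the quotient, then bounded extension to the completion), and deciding how much topological hypothesis on $V$ the later applications actually need — whether one wants merely a unitary representation on a Hilbert space with each $g$ acting continuously, or genuine joint continuity. Since the hypotheses in the theorem statement only ask for "a unitary representation" and "a continuous, $G$-equivariant map," the weaker reading suffices, and the construction above delivers it. No single step is deep; the work is in assembling the reproducing kernel Hilbert space carefully and verifying equivariance survives completion.
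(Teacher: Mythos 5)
Your proposal is correct and is essentially the same argument as the paper's: the paper builds the pre-Hilbert space as the formal span of the points of $X$ with $\langle x,y\rangle = K(x,y)$, quotients by the null vectors, completes, and verifies strong continuity of the $G$-action by expanding $|gv-v|^2$ and invoking continuity of $K$ and of the action — exactly your steps, with your reproducing-kernel functions $K_x$ playing the role of the formal basis vectors $[x]$. The two constructions are canonically isometric, so there is nothing genuinely different here.
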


This theorem is a variant of a characterization of positive-definite
kernels due to Bochner \cite{Bo}.  See, for example, \S5 of Chapter~IV
in \cite{Lang} for a closely related result (with the same idea behind
it).

\begin{proof}
Let $W$ be the complex vector space formally spanned by the points of
$X$, and define a form $\langle \cdot, \cdot \rangle$ on $W$ by setting
$\langle x,y\rangle = K(x,y)$ for $x,y \in X$ and extending linearly in
the first coordinate and conjugate linearly in the second. The action
of $G$ on $X$ extends to an action of $G$ on $W$.

Because $K$ is a $G$-invariant positive-definite kernel, the form
$\langle \cdot, \cdot \rangle$ on $W$ is $G$-invariant and positive
semidefinite.  Let $W_0$ be the subspace of vectors that have norm $0$.
Then the inner product on $W/W_0$ is positive definite, and the
completion $V$ of $W/W_0$ is a Hilbert space.  Let $\varphi \co X \to
V$ be the obvious map (the composition of the trivial embedding of $X$
in $W$, the quotient map by $W_0$, and the embedding in the
completion). Then $\varphi$ is continuous and $G$-equivariant, and
$K(x,y) = \langle \varphi(x), \varphi(y) \rangle$ for all $x,y \in X$.

To complete the proof, we need only verify that $V$ is a unitary
representation of $G$.  In other words, for each $v \in V$, the map $g
\mapsto gv$ must be a continuous function from $G$ to $V$.  By
$G$-invariance it suffices to verify continuity at $g=1$.  Thus, we
wish to show that $gv \to v$ as $g \to 1$.  Furthermore, it suffices to
show this convergence for a dense subset of $V$, and we choose $W/W_0$
as that subset. Therefore, we can assume that
$$
v = \sum_{x \in X} c_x [x],
$$
where $[x]$ denotes the vector in $V$ corresponding to $x \in X$ and
where all but finitely many of the coefficients $c_x$ vanish.  Then
$$
|gv-v|^2 = \sum_{x,y \in X} c_x \overline{c_y} (2K(x,y) - K(x,gy) - K(gx,y)).
$$
By the continuity of $K$ and the action of $G$ on $X$, the finitely
many terms on the right side with nonvanishing coefficients can be made
arbitrarily small by making $g$ close to the identity.  Thus, $V$ is
indeed a unitary representation of $G$.
\end{proof}

If $V$ splits into an orthogonal direct sum $V = V_1 \oplus V_2$ as a
representation of $G$, then each corresponding kernel $K$ splits as $K
= K_1+K_2$, where $K_1$ and $K_2$ correspond to the subrepresentations
$V_1$ and $V_2$.

Suppose that $G$ is compact.  Then by the Peter-Weyl theorem, every
unitary representation of $G$ is an orthogonal direct sum of
finite-dimensional irreducible representations.  Thus, the cone of
$G$-invariant positive-definite kernels is spanned by those arising
from irreducible representations.

When $G$ acts transitively on $X$, we can identify $X$ with $G/H$,
where $H$ is the stabilizer of a point $e \in X$.  Then a
$G$-equivariant map $\varphi \co X \to V$ is completely determined by
$\varphi(e)$ via $\varphi(ge) = g \varphi(e)$, and $\varphi(e)$ can be
any vector in $V$ that is fixed by $H$.

The simplest case is when $(G,H)$ is a Gelfand pair: then the fixed
space $V^H$ has dimension at most $1$ when $V$ is irreducible, so each
irreducible representation is associated with at most one
positive-definite kernel (up to scaling). This occurs, for example,
when $X$ is a sphere, projective space, or Grassmannian and $G$ is the
group of isometries of $X$.

When the fixed space $V^H$ has dimension greater than $1$, the
situation is more complicated.  Then $V$ is associated with several
positive-definite kernels, but in fact there is a richer structure
behind them. For $x_1,x_2 \in X$, define a sesquilinear form
$K_{x_1,x_2}$ on $V^H$ as follows. Let $x_i=g_ie$ with $g_1,g_2 \in G$,
and for $v_1,v_2 \in V^H$ define
$$
K_{x_1,x_2}(v_1,v_2) = \langle g_1v_1, g_2v_2 \rangle.
$$
The key property of these forms is that for all $N \in \N$,
$x_1,\dots,x_N \in X$, and $v_1,\dots,v_N \in V^H$,
$$
\sum_{1 \le i,j \le N} K_{x_i,x_j}(v_i,v_j) \ge 0.
$$
To prove this inequality, note that
$$
\sum_{1 \le i,j \le N} K_{x_i,x_j}(v_i,v_j) =
\sum_{1 \le i,j \le N}
\langle g_i v_i, g_j v_j \rangle =
\left|\sum_{1 \le i \le N} g_i v_i \right|^2.
$$
As a consequence (by taking $v_1 = \dots = v_N$), for all $N \in \N$
and $x_1,\dots,x_N \in X$,
$$
\sum_{1 \le i,j \le N} K_{x_i,x_j} \succeq 0,
$$
where $A \succeq 0$ means that $A$ is positive-semidefinite and
Hermitian.  The $G$-invariant positive-definite kernels are given by
$(x,y) \mapsto K_{x,y}(v,v)$ with $v \in V^H$, but the underlying
structure of $K$ is of broader importance.

In general, define a \emph{matrix-valued positive-definite kernel} to
be a map that takes $x,y \in X$ to a sesquilinear form $K_{x,y}$ on a
fixed complex vector space $W$ (not necessarily finite-dimensional),
with the following three properties:
\begin{enumerate}
\item For all $v,w \in W$, the map $(x,y) \mapsto K_{x,y}(v,w)$ is
    continuous.

\item For all $x,y \in X$ and $v,w \in W$,
$$
K_{x,y}(v,w) = \overline{K_{y,x}(w,v)}.
$$

\item For all $N \in \N$, all $x_1,\dots,x_N \in X$, and all
    $w_1,\dots,w_N \in W$,
$$
\sum_{1 \le i,j \le N}  K_{x_i,x_j}(w_i,w_j)
\ge 0.
$$
\end{enumerate}
The last two properties are equivalent to requiring that
$$
\big(K_{x_i,x_j}(w_i,w_j)\big)_{1 \le i,j \le N} \succeq 0.
$$
We say that $K$ is defined on $X$ and over $W$.  The matrix-valued kernel is
\emph{$G$-invariant} if $K_{gx,gy} = K_{x,y}$ for all $g \in G$ and
$x,y \in X$.  The construction above defines a $G$-invariant
matrix-valued positive-definite kernel over $V^H$ whenever $G$ acts
transitively on $X$.  Note that when $\dim W = 1$, a matrix-valued
positive-definite kernel over $W$ is the same as an ordinary
(scalar-valued) kernel.

When $G$ does not act transitively on $X$, the identification
of the vector space $\Hom_G(X,V)$ of continuous,
$G$-equivariant maps with $V^H$ breaks down. Nevertheless,
essentially the same construction works. One can define a
$G$-invariant matrix-valued positive-definite kernel over
$\Hom_G(X,V)$ by setting
$$
K_{x_1,x_2}(\varphi_1,\varphi_2) =
\langle \varphi_1(x_1), \varphi_2(x_2) \rangle
$$
for $x_1,x_2 \in X$ and $\varphi_1, \varphi_2 \in \Hom_G(X,V)$. This
construction is fully general, aside from changing variables in trivial
ways:

\begin{theorem} \label{theorem:matrixpdk}
For every $G$-invariant matrix-valued positive-definite kernel $K$ on
$X$ and over $W$, there exists a unitary representation $V$ of $G$ and
a linear function $\varphi \co W \to \Hom_G(X,V)$ (written $w \mapsto
\varphi_w$) such that for all $x,y \in X$ and $v,w \in W$,
$$
K_{x,y}(v,w) = \langle \varphi_v(x), \varphi_w(y) \rangle.
$$
\end{theorem}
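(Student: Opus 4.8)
The plan is to mimic the proof of Theorem~\ref{theorem:pdk}, building the representation $V$ directly out of the data of the matrix-valued kernel, and then to use the universal property of $V$ to factor the kernel through $\Hom_G(X,V)$. Concretely, first I would form the complex vector space $W'$ formally spanned by the symbols $[x,w]$ for $x \in X$ and $w \in W$, made linear in $w$ (so $[x, w_1 + cw_2] = [x,w_1] + c[x,w_2]$, i.e.\ $W'$ is the direct sum over $x \in X$ of copies of $W$), with the diagonal $G$-action $g \cdot [x,w] = [gx,w]$. On $W'$ define the sesquilinear form by $\langle [x,v],[y,w] \rangle = K_{x,y}(v,w)$ and extend (conjugate-)linearly. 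The Hermitian symmetry $K_{x,y}(v,w) = \overline{K_{y,x}(w,v)}$ makes this form Hermitian, the positive-semidefiniteness axiom makes it positive semidefinite, and $G$-invariance of $K$ makes the form $G$-invariant.

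Next I would quotient by the null space $W'_0$ of the form and take the Hilbert-space completion $V$ of $W'/W'_0$; as in Theorem~\ref{theorem:pdk}, the inner product descends to a positive-definite $G$-invariant inner product, so $V$ is a pre-Hilbert space with a $G$-action preserving the inner product. For each fixed $w \in W$, define $\varphi_w \co X \to V$ by $\varphi_w(x) = $ the image of $[x,w]$ in $V$. Then $\varphi_w(gx)$ is the image of $[gx,w] = g \cdot [x,w]$, which is $g \cdot \varphi_w(x)$, so $\varphi_w$ is $G$-equivariant; continuity of $x \mapsto \varphi_w(x)$ follows from the continuity axiom on $K$ (the norm $|\varphi_w(x) - \varphi_w(x')|^2$ expands into a finite combination of values $K_{\cdot,\cdot}(w,w)$). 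The map $w \mapsto \varphi_w$ is linear by construction, and by definition of the form, $\langle \varphi_v(x), \varphi_w(y) \rangle = \langle [x,v],[y,w] \rangle = K_{x,y}(v,w)$, which is exactly the claimed factorization.

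The one genuine technical point — and the main obstacle — is verifying that $V$ is a \emph{unitary representation} in the sense used earlier, i.e.\ that for each $v \in V$ the orbit map $g \mapsto gv$ is continuous from $G$ to $V$. This is handled exactly as in the proof of Theorem~\ref{theorem:pdk}: by $G$-invariance it suffices to check continuity at $g = 1$, and by density it suffices to check it for $v$ in $W'/W'_0$, so we may take $v = \sum_{i} [x_i, w_i]$ a finite sum. Then
$$
|gv - v|^2 = \sum_{i,j} \big(2 K_{x_i,x_j}(w_i,w_j) - K_{x_i, gx_j}(w_i,w_j) - K_{gx_i, x_j}(w_i,w_j)\big),
$$
and each of the finitely many terms tends to $0$ as $g \to 1$ by the joint continuity of $(x,y) \mapsto K_{x,y}(w,w')$ (applied to the relevant pairs $w_i,w_j$) together with continuity of the action. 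Hence $gv \to v$, and $V$ is a unitary representation. Finally, one notes that $\Hom_G(X,V)$ denotes continuous $G$-equivariant maps, so each $\varphi_w$ lies in $\Hom_G(X,V)$ and the statement is proved; if one wishes $\varphi_w$ to land in an honest Hilbert space one simply replaces $V$ by its completion, which changes nothing since the images of the $[x,w]$ already lie in the pre-completion space.
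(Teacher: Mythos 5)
Your construction is exactly the paper's: your space $W'$ of symbols $[x,w]$, linear in $w$, is precisely $U = W \otimes_\C \C X$, and the form, the quotient by the null space, the completion, the definition of $\varphi_w$, and the strong-continuity check all match the paper's proof (which handles continuity by reference to the proof of Theorem~\ref{theorem:pdk}, whereas you write it out). The argument is correct and essentially identical to the one in the paper.
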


\begin{proof}
Let $U = W \otimes_\C \C X$, where $\C X$ denotes the complex vector
space formally spanned by the points of $X$.  We define a form $\langle
\cdot, \cdot \rangle$ on $U$ by
$$
\left\langle \sum_i v_i \otimes x_i,
\sum_j w_j \otimes x_j \right\rangle =
\sum_{i,j} K_{x_i,x_j}(v_i,w_j)
$$
for $v_i,w_i \in W$ and $x_i \in X$.  Because $K$ is a matrix-valued
positive-definite kernel, this defines a Hermitian,
positive-semidefinite form on $U$.  Let $U_0$ denote the subspace of
vectors with norm $0$, and let $V$ be the completion of $U/U_0$, so $V$
is a Hilbert space.  Define $\varphi_w(x)$ to be the image of $w\otimes
x$ in $V$.

The Hilbert space $V$ is a unitary representation of $G$, with the
trivial action of $G$ on $W$ and the usual action on $X$.  (Strong
continuity follows as in the proof of Theorem~\ref{theorem:pdk}.)  By
construction,
$$
K_{x,y}(v,w) = \langle \varphi_v(x), \varphi_w(y) \rangle,
$$
which completes the proof.
\end{proof}

When $X = S^{n-1}$ and $G$ is the stabilizer in $O(n)$ of a
point in $X$, the matrices constructed in Theorem~3.1 of
\cite{BV1} (by a different method from that used here) define
matrix-valued positive-definite kernels, and we were led to our
construction in an attempt to develop a more abstract approach
to Bachoc and Vallentin's discovery.

\subsection{Linear and semidefinite programming bounds}

Given the machinery of positive-definite kernels, it is easy to write
down linear and semidefinite programming bounds for energy
minimization.  We will write $X^2/G$ for the set of orbits of $G$
acting on pairs, and $[x,y]$ will denote the orbit of the pair $(x,y)$.

The linear programming bounds involve linear constraints on the
\emph{$G$-invariant pair distribution} of a code. Given a finite subset
$\mathcal{C} \subseteq X$, define
$$
A_{x,y} = |\{(u,v) \in \mathcal{C}^2 : [u,v] = [x,y]\}|.
$$
These numbers satisfy some obvious linear constraints.  They are all
nonnegative, and their sum over the set $X^2/G$ of orbits is
$|\mathcal{C}|^2$. Furthermore, $A_{x,x} = |\mathcal{C}|$ if $G$ acts
transitively on $X$, and even if that is not the case, the diagonal
terms $A_{x,x}$ are the orbit sizes of $G$ acting on $X$.

In addition to these obvious constraints, each $G$-invariant
positive-definite kernel $K \co X \times X \to \C$ gives a more subtle
constraint.  Specifically,
$$
\sum_{[x,y] \in X^2/G} A_{x,y} K(x,y) =
\sum_{x,y \in \mathcal{C}} K(x,y) \ge 0.
$$
Of course, the sum over $[x,y] \in X^2/G$ means a sum over
representatives of the orbits.  Positive-definite kernels are
remarkable because they have this property despite typically being
negative at many points. Pfender \cite{P} discovered that they are not
the only such functions, and one can occasionally improve the linear
programming bounds by incorporating his functions. Unfortunately the
improvement seems to be small in general, and the method becomes much
less systematic (because the representation-theoretic context is lost),
so we will not use Pfender's approach here.

Given a $G$-invariant potential function $f \co X \times X \to
\R$, define the energy of $\mathcal{C}$ by
$$
\frac{1}{2}
\sum_{\shortstack[c]{$\scriptstyle x,y \in \mathcal{C}$\\
$\scriptstyle x \ne y$}}
f(x,y) = \frac{1}{2}
\sum_{\shortstack[c]{$\scriptstyle [x,y] \in X^2/G$\\
$\scriptstyle x \ne y$}}
A_{x,y} f(x,y).
$$
Potential energy is a linear functional of the pair distribution $A$,
so by solving a linear programming problem (possibly infinite dimensional) one can optimize it subject to the linear constraints
described above.  To prove a lower bound for energy, valid for
arbitrary sets of $|\mathcal{C}|$ points in $X$, one need only find a
feasible point in the dual linear program.

When $X = G/H$ and $(G,H)$ is a Gelfand pair, linear programming bounds
express all systematically available information about the pair
distribution.  In more general cases, linear programming bounds can be
generalized to two-point semidefinite programming bounds.  They work
the same way, except they use the matrix-valued positive-definite
kernels $K_{x,y}$ developed in Subsection~\ref{subsec:pdk}.  For each
such kernel,
$$
\sum_{[x,y] \in X^2/G} A_{x,y} K_{x,y} \succeq 0,
$$
which is a semidefinite constraint on $A$.  Thus, optimizing energy
subject to these constraints becomes a semidefinite programming
problem.

\subsection{Three-point bounds and beyond}

The most important application of semidefinite programming bounds is to
prove three-point bounds (as in \cite{BV1}).  Fix a point $e \in X$ and
let $H = \Stab_G(e)$ be its stabilizer.  The semidefinite programming
bounds give constraints on $H$-invariant pair distributions on $X$, and
symmetrizing $e$ with the other two points turns them into constraints
on $G$-invariant triple distributions.  One can then attempt to
optimize energy (or other quantities) subject to these constraints.

Of course, we are not limited to using just three points, and one can
prove $k$-point bounds in the same way.  (Musin \cite{M} was the first
to formulate these bounds for spherical codes.) However, as $k$
increases the bounds become increasingly difficult to compute with. The
difficulty is that $k$-point distributions are functions on $X^k/G$, so
$k$-point bounds involve optimization over functions on this space.
When $X^k/G$ is one-dimensional, the optimization problem is usually
doable, but it rapidly becomes intractable as the dimension increases.

For a concrete example, suppose $X = S^{n-1}$ and $G = O(n)$.  Then
elements of $X^k/G$ are determined by their pairwise distances, so
$\dim X^k/G = \binom{k}{2}$.  For $k=3$, functions of three variables
are barely tractable, but for $k=4$, functions of six variables are too
complicated: the space of polynomials of degree at most $m$ in six
variables has dimension $\binom{m+6}{6}$, which grows too rapidly to
allow for extensive computations.

The situation can be much worse for other spaces.  For example, a pair
of points in $\C\Ps^{n-1}$ is determined up to isometries by the
distance between them, but for triples of points there is a fourth
parameter, namely a complex phase change.  (Two unit vectors in $\C^n$
can easily be phase shifted so that their inner product is real, but
for three vectors that is generally impossible.)  The Grassmannian of
$k$-dimensional subspaces in $\R^n$ is even worse, since $\min(k,n-k)$
parameters are required to determine a pair up to isometries (see, for
example, \cite{B}).

\subsection{Explicit computations}

Of course, to apply any of these bounds in practice, one must
carry out the representation-theoretic computations explicitly.
Fortunately, in the cases of interest in this paper, no more
calculations are needed to set up $k$-point bounds than
two-point bounds.

Recall that for the sphere $S^{n-1}$, it is a theorem of Schoenberg
\cite{Scho} that the cone of $O(n)$-invariant positive-definite kernels
is spanned by the functions $(x,y) \mapsto P_k^n(\langle x,y \rangle)$.
Here, $P_k^n$ denotes the $k$-th degree Gegenbauer polynomial for
$S^{n-1}$. See Subsection~2.2 of \cite{CK} for a brief account of these
calculations.  In terms of Theorem~\ref{theorem:pdk}, these
positive-definite kernels correspond to the irreducible representations
of $O(n)$ that contain a nonzero vector fixed by $O(n-1)$ (and the
other irreducible representations do not have corresponding kernels).

Once this characterization of positive-definite kernels is known, one
can extend it to $k$-point bounds without needing any additional
representation theory or information about special functions.  For
example, three-point bounds can be handled as follows.  This gives a
new proof of Corollary~3.5 in \cite{BV1}.

\begin{theorem} \label{theorem:explicit}
Let $H$ be the stabilizer in $O(n)$ of a point $e$ in $S^{n-1}$. For
each $k \ge 0$, there is an $H$-invariant matrix-valued
positive-definite kernel on $S^{n-1}$ that takes $x_1,x_2 \in S^{n-1}$
to the infinite matrix whose $(i_1,i_2)$-entry (indexed starting with
$0$) is
$$
u_1^{i_1} u_2^{i_2} \big((1-u_1^2)(1-u_2^2)\big)^{k/2}
P^{n-1}_k \left(\frac{t-u_1u_2}{\sqrt{(1-u_1^2)(1-u_2^2)}}\right),
$$
where $u_j = \langle e, x_j \rangle$ and $t = \langle x_1,x_2 \rangle$.
\end{theorem}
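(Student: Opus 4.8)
The plan is to realize the claimed kernel as the restriction to $S^{n-1}$ of one of Schoenberg's positive-definite kernels on a larger sphere, combined with the construction of Subsection~\ref{subsec:pdk}. Fix the point $e = (1,0,\dots,0) \in S^{n-1}$, so that $H = \Stab_{O(n)}(e) \cong O(n-1)$, where $O(n-1)$ acts on the last $n-1$ coordinates. Write each $x \in S^{n-1}$ as $x = (u, \sqrt{1-u^2}\, \xi)$ with $u = \langle e,x\rangle \in [-1,1]$ and $\xi \in S^{n-2}$ (with $\xi$ undetermined when $u = \pm 1$, a harmless degeneracy). The point is that for two such points $x_1,x_2$ one has
$$
t := \langle x_1,x_2 \rangle = u_1 u_2 + \sqrt{(1-u_1^2)(1-u_2^2)}\,\langle \xi_1,\xi_2\rangle,
$$
so that $\langle \xi_1,\xi_2\rangle = (t - u_1 u_2)/\sqrt{(1-u_1^2)(1-u_2^2)}$ is exactly the argument appearing inside $P^{n-1}_k$ in the statement.

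First I would apply Schoenberg's theorem on the sphere $S^{n-2}$: the map $(\xi_1,\xi_2) \mapsto P^{n-1}_k(\langle \xi_1,\xi_2\rangle)$ is an $O(n-1)$-invariant positive-definite kernel on $S^{n-2}$, and by Theorem~\ref{theorem:pdk} it factors as $\langle \psi(\xi_1),\psi(\xi_2)\rangle$ for some $O(n-1)$-equivariant $\psi \co S^{n-2} \to V_k$, where $V_k$ is the relevant irreducible (the degree-$k$ spherical harmonics on $S^{n-2}$). Now define, for $x = (u,\sqrt{1-u^2}\,\xi) \in S^{n-1}$ and each $i \ge 0$,
$$
\varphi_i(x) = u^{i}\,(1-u^2)^{k/2}\,\psi(\xi) \in V_k.
$$
Each $\varphi_i$ is continuous on $S^{n-1}$ — note the factor $(1-u^2)^{k/2}$ kills the ambiguity of $\xi$ at the poles when $k \ge 1$, and for $k=0$ the kernel is just $P^n_0 \equiv 1$ times a rank-one Vandermonde block, which is trivially handled. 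Moreover each $\varphi_i$ is $H$-equivariant, since $H$ acts trivially on $u$ and through $O(n-1)$ on $\xi$, and $\psi$ is $O(n-1)$-equivariant. Assembling the $\varphi_i$ into a single map $\varphi \co S^{n-1} \to W \otimes V_k$ (with $W$ the space of which the infinite matrix is the Gram matrix, i.e.\ $\varphi = \sum_i [i] \otimes \varphi_i$ in the sense of Theorem~\ref{theorem:matrixpdk}), the matrix-valued kernel $K_{x_1,x_2}$ with $(i_1,i_2)$ entry $\langle \varphi_{i_1}(x_1),\varphi_{i_2}(x_2)\rangle$ is, by the construction in Subsection~\ref{subsec:pdk}, an $H$-invariant matrix-valued positive-definite kernel. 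A direct computation then gives
$$
\langle \varphi_{i_1}(x_1),\varphi_{i_2}(x_2)\rangle
= u_1^{i_1} u_2^{i_2}\big((1-u_1^2)(1-u_2^2)\big)^{k/2} P^{n-1}_k(\langle\xi_1,\xi_2\rangle),
$$
and substituting the expression for $\langle \xi_1,\xi_2\rangle$ above yields exactly the matrix entry in the statement.

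The only genuinely delicate point is continuity at the poles $u = \pm 1$, where the ``coordinate'' $\xi$ is not defined; the resolution is that $(1-u^2)^{k/2}\psi(\xi)$ extends continuously by $0$ there for $k \ge 1$ (because $\psi$ is bounded and $\psi(\xi)$ enters only multiplied by a scalar vanishing at the poles), and the whole matrix entry likewise vanishes when either $u_1$ or $u_2$ is $\pm 1$, so the formula — interpreted by continuity — is well defined; I would verify this explicitly. Everything else is formal: positive-definiteness is inherited from the Subsection~\ref{subsec:pdk} construction (equivalently, it follows by writing the full kernel as a product of the manifestly positive-semidefinite Vandermonde kernel $(u_1 u_2)^{i_1 i_2}$-type block and the pulled-back Schoenberg kernel, and noting that Schur products of positive-semidefinite matrices are positive-semidefinite), and $H$-invariance is immediate from $H$ acting trivially on the $u$-coordinate and isometrically on $\xi$. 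Finally, to conclude it reproves Corollary~3.5 of \cite{BV1}, I would observe that these kernels for $k = 0,1,2,\dots$ together span the cone of $H$-invariant matrix-valued positive-definite kernels on $S^{n-1}$, which follows from the Peter–Weyl decomposition of $L^2(S^{n-1})$ under $O(n)$ restricted to $H$, exactly as in the discussion following Theorem~\ref{theorem:pdk}.
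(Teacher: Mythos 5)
Your proposal is correct and follows essentially the same route as the paper: decompose $S^{n-1}$ into parallel slices over the $u$-coordinate, define the $H$-equivariant maps $\varphi_i(x) = u^i(1-u^2)^{k/2}\psi(\xi)$ using Schoenberg's theorem on the equatorial $S^{n-2}$, and read off the Gram matrix via the construction of Subsection~\ref{subsec:pdk}. Your explicit treatment of continuity at the poles and the alternative Schur-product justification of positive semidefiniteness are harmless additions to what the paper does.
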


Of course, for numerical computations one uses finite submatrices of
these infinite matrices.

\begin{proof}
Let $X = S^{n-1}$, with $e$ and $H$ as in
Theorem~\ref{theorem:explicit}.  We must compute matrix-valued
positive-definite kernels on the vector space $\Hom_H(X,V)$, where $V$
is an irreducible unitary representation of $H$. Of course,
$\Hom_H(X,V)$ is infinite dimensional, but we will choose a basis for a
dense subset.

Let
$$
Y = \{ y \in S^{n-1} : \langle e,y \rangle = 0\}
$$
be the equator relative to $e$.  To specify a map $\varphi \in
\Hom_H(X,V)$, we just need to specify its restriction to the equator
and to all the parallel slices of the sphere.  Fortunately, that is
relatively manageable.  As in Subsection~\ref{subsec:pdk}, $\Hom_H(Y,V)
\cong V^H$ because $H$ acts transitively on $Y$, and the Gelfand pair
property implies that $V^H$ is at most one-dimensional; the same is
true for all the parallel slices. Thus, the map $\varphi$ is determined
by specifying a scaling factor on each slice, which is a function of
one variable.

Suppose $\dim V^H = 1$, and choose an element $\tilde \varphi \in
\Hom_H(Y,V)$ that is not identically zero.  We will use $\tilde
\varphi$ to define $\varphi$ on the equator $Y$.  To complete the
determination of $\varphi$, we must specify the scaling factor on each
parallel slice. We will use scaling factors that are polynomials in the
inner product with $e$ (up to multiplication by some fixed function $f$
to be specified later). Specifically, for $-1 \le u \le 1$ and $y \in
Y$, define
$$
\varphi_i\big(u e + \sqrt{1-u^2} y\big) = u^i f(u) \tilde \varphi(y).
$$
The function $\varphi_i$ is $H$-equivariant because $\tilde \varphi$
is, and as $i$ varies these functions are dense in $\Hom_H(X,V)$.

To compute the matrix-valued positive-definite kernel, we need to
compute $\langle \varphi_{i_1}(x_1), \varphi_{i_2}(x_2)\rangle$ for
$x_1,x_2 \in X$.  If we write $x_j = u_j e + \sqrt{1-u_j^2}\, y_j$,
then
$$
\langle \varphi_{i_1}(x_1), \varphi_{i_2}(x_2)\rangle =
u_1^{i_1} u_2^{i_2} f(u_1) f(u_2)\,
\langle \tilde \varphi (y_1), \tilde \varphi(y_2) \rangle.
$$
The inner product $\langle \tilde \varphi (y_1), \tilde
\varphi(y_2) \rangle$ can be computed using Schoenberg's theorem,
because $Y$ is a sphere and $H$ is the full isometry group of $Y$.
Specifically, there is some $k$ (depending on $V$) such that
$$
\langle \tilde \varphi (y_1), \tilde
\varphi(y_2) \rangle = P^{n-1}_k(\langle y_1,y_2 \rangle)
$$
if $\tilde \varphi$ is rescaled appropriately, which we can assume.

If we let $t = \langle x_1,x_2 \rangle$, then
$$
\langle y_1,y_2 \rangle = \frac{t-u_1u_2}{\sqrt{(1-u_1^2)(1-u_2^2)}}.
$$
It follows that the $(i_1,i_2)$-entry $\langle \varphi_{i_1}(x_1),
\varphi_{i_2}(x_2)\rangle$ of the matrix-valued positive-definite
kernel is
$$
u_1^{i_1} u_2^{i_2} f(u_1) f(u_2) \,
P^{n-1}_k \left(\frac{t-u_1u_2}{\sqrt{(1-u_1^2)(1-u_2^2)}}\right).
$$

It is convenient to choose $f(u) = (1-u^2)^{k/2}$.  Then
$$
\langle \varphi_{i_1}(x_1), \varphi_{i_2}(x_2)\rangle =
u_1^{i_1} u_2^{i_2} \big((1-u_1^2)(1-u_2^2)\big)^{k/2}
P^{n-1}_k \left(\frac{t-u_1u_2}{\sqrt{(1-u_1^2)(1-u_2^2)}}\right).
$$
This choice has the advantage that the right side is a polynomial in
$u_1$, $u_2$, and $t$ (because $P_k^{n-1}$ is an even or odd function,
according as $k$ is even or odd).
\end{proof}

The same approach works straightforwardly for $k$-point bounds:
choosing $H$ to be the stabilizer of several points recovers the
results of Musin \cite{M}.  It also works for projective spaces.  As in
the case of spheres, the calculations for an $n$-dimensional projective
space with respect to the stabilizer of a point reduce immediately to
those for an $(n-1)$-dimensional space with respect to the full group.

The calculations in real projective space are almost the same as those
in the sphere.  If we lift points arbitrarily to the sphere (as
discussed in Subsection~\ref{subsec:notation}), then we just need to
avoid any sign ambiguity. Specifically, in terms of the three inner
products $u_1$, $u_2$, and $t$ from Theorem~\ref{theorem:explicit}, we
want only terms that are invariant under changing the signs of two of
the three variables.  That means we take only the entries for which
$i_1 \equiv i_2 \equiv k \pmod{2}$. This submatrix defines a
matrix-valued positive-definite kernel on real projective space.

\section{Three-point bounds for energy minimization}

In this section, we write down three-point bounds for energy
minimization in spheres or real projective spaces.  We begin with
spheres, after which we can easily adapt the answer to real projective
spaces. All of the required theory is in Section~\ref{sec:lpsdp}, but
there are a number of details that must be worked out carefully.

It will be convenient to write potential functions in terms of the
inner product.  Given a potential function $f \co [-1,1) \to \R$,
define
$$
\widetilde{E}_f(\mathcal{C}) = \frac{1}{2}
\sum_{\shortstack[c]{$\scriptstyle x,y \in \mathcal{C}$\\
$\scriptstyle x \ne y$}}
f\big(\langle x,y \rangle\big).
$$
Because $|x-y|^2 = 2 - 2 \langle x,y \rangle$, if we set $f(t) =
g(2-2t)$, then $\widetilde{E}_f(\mathcal{C}) = E_g(\mathcal{C})$.  The
function $f$ is absolutely monotonic (i.e., it is infinitely
differentiable and all of its derivatives are nonnegative) if and only
if $g$ is completely monotonic.  Thus, $\mathcal{C}$ is universally
optimal if and only if it minimizes $\widetilde{E}_f$ for all
absolutely monotonic $f$.

Given a configuration $\mathcal{C} \subseteq S^{n-1}$ with
$|\mathcal{C}|=N>2$, define the corresponding triple distribution by
$$
A_{u,v,t} = |\{(x,y,z) \in \mathcal{C}^3 : \langle x,y \rangle = u,
\langle y,z \rangle = v, \langle z,x \rangle = t\}|.
$$
This function counts ordered triples of points modulo the action of
$O(n)$. Of course, $A_{u,v,t} \ge 0$, and $A_{u,v,t} = 0$ unless $-1
\le u,v,t \le 1$ and
$$
1+2uvt-u^2-v^2-t^2 \ge 0.
$$
In other words, the $3 \times 3$ Gram matrix
$$
\begin{pmatrix}
1 & u & v\\
u & 1 & t\\
v & t & 1
\end{pmatrix}
$$
must be positive semidefinite.  Furthermore, $A_{u,v,t}$ is symmetric
in $u$, $v$, and $t$.  It satisfies the identities $A_{1,1,1} = N$,
$$
\sum_{u} A_{u,u,1} = N^2,
$$
$$
\sum_{u,v,t} A_{u,v,t} = N^3,
$$
and
$$
\sum_{v,t} A_{u,v,t} = N A_{u,u,1}.
$$

Define
$$
D = \{(u,v,t) : -1 \le u,v,t < 1 \textup{ and } 1+2uvt-u^2-v^2-t^2 \ge 0\}.
$$
Because the potential function is not necessarily defined at inner
product $1$, it will be important to work with sums over $D$. For
example,
\begin{align*}
\sum_{(u,v,t) \in D} A_{u,v,t} &= \sum_{u,v,t} A_{u,v,t} - 3
\sum_u A_{u,u,1} + 2 A_{1,1,1}\\
&= N^3 - 3N^2 + 2N\\
&= N(N-1)(N-2).
\end{align*}
Equivalently, summing over $D$ counts ordered triples of distinct
points.

We can express $\widetilde{E}_f(\mathcal{C})$ as a sum over $D$ by
writing
\begin{align*}
\widetilde{E}_f(\mathcal{C}) &= \frac{1}{2}\sum_{u < 1} A_{u,u,1} f(u)\\
& = \frac{1}{2N} \sum_{u<1, v, t} A_{u,v,t} f(u) \\
& = \frac{1}{2N} \left(2 \sum_{u < 1} A_{u,u,1} f(u) +
\sum_{(u,v,t) \in D} A_{u,v,t} f(u) \right)\\
& = \frac{1}{2N} \left(4 \widetilde{E}_f(\mathcal{C}) +
\sum_{(u,v,t) \in D} A_{u,v,t} f(u) \right).
\end{align*}
It follows that
\begin{equation} \label{eqn:Eformula}
\widetilde{E}_f(\mathcal{C}) =
\frac{1}{6(N-2)} \sum_{(u,v,t) \in D} A_{u,v,t} \big(f(u) + f(v) + f(t)\big).
\end{equation}

Let $S_k^n(u,v,t)$ be the infinite matrix whose $(i,j)$-entry (indexed
starting with $0$) is the symmetrization of
$$
u^{i} v^{j} \big((1-u^2)(1-v^2)\big)^{k/2}
P^{n-1}_k \left(\frac{t-uv}{\sqrt{(1-u^2)(1-v^2)}}\right)
$$
in $u$, $v$, and $t$ (i.e., the average over all permutations of the
variables).  The sum of this matrix over the code is positive
semidefinite by Theorem~\ref{theorem:explicit}; in terms of
$A_{u,v,t}$,
$$
\sum_{u,v,t} A_{u,v,t} S_k^n(u,v,t) \succeq 0.
$$
If we break the sum up according to which variables equal $1$, we find
that
$$
\sum_{(u,v,t) \in D} A_{u,v,t} S_k^n(u,v,t) +
3 \sum_{u<1} A_{u,u,1} S^n_k(u,u,1) + N S^n_k(1,1,1) \succeq 0.
$$
(It might appear that $S^n_k(u,v,t)$ is undefined when one of the
variables equals $1$, but in fact all its entries are polynomials in
$u$, $v$, and $t$.) The same trick as we used for $\widetilde{E}_f$
shows that $\sum_{u<1} A_{u,u,1} S^n_k(u,u,1)$ equals
$$
\frac{1}{3(N-2)}
\sum_{(u,v,t) \in D} A_{u,v,t} \big(S^n_k(u,u,1) + S^n_k(v,v,1) + S^n_k(t,t,1)\big).
$$
(In fact, this is simply \eqref{eqn:Eformula} with $f(u)$ replaced with
$2 S^n_k(u,u,1)$.)  Thus, if we define
$$
T^n_k(u,v,t) = (N-2) S^n_k(u,v,t) + S^n_k(u,u,1) + S^n_k(v,v,1) + S^n_k(t,t,1),
$$
then
\begin{equation*}
N(N-2)S^n_k(1,1,1) + \sum_{(u,v,t) \in D} A_{u,v,t} T^n_k(u,v,t) \succeq 0.
\displaybreak\end{equation*}
Furthermore, note that $S^n_k(1,1,1)$ is the zero matrix unless $k=0$,
in which case all of its entries are $1$.  If we let $J$ denote the all
$1$ matrix, then our inequality becomes
$$
N(N-2)\delta_{k,0} J + \sum_{(u,v,t) \in D} A_{u,v,t} T^n_k(u,v,t) \succeq 0,
$$
where $\delta$ denotes the Kronecker delta function.

\begin{theorem} \label{theorem:primal}
The minimal value of $\widetilde{E}_f$ for $N>2$ points in $S^{n-1}$ is
greater than or equal to the optimum of the following semidefinite
program: minimize
$$
\frac{1}{6(N-2)} \sum_{(u,v,t) \in D} A_{u,v,t} \big(f(u) + f(v) + f(t)\big)
$$
over all choices of $A_{u,v,t} \ge 0$ for $(u,v,t) \in D$, subject to
$$
\sum_{(u,v,t) \in D} A_{u,v,t} =
N(N-1)(N-2)
$$
and
$$
N(N-2)\delta_{k,0} J + \sum_{(u,v,t) \in D} A_{u,v,t} T^n_k(u,v,t) \succeq 0
$$
for $k \ge 0$.
\end{theorem}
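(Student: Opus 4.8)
The plan is to prove the inequality by relaxation: I will show that for every configuration $\mathcal{C} \subseteq S^{n-1}$ with $|\mathcal{C}| = N > 2$, the restriction to $D$ of the triple distribution $A_{u,v,t}$ of $\mathcal{C}$ is a feasible point of the semidefinite program, and that its objective value is exactly $\widetilde{E}_f(\mathcal{C})$. Granting this, the optimum of the program is at most $\widetilde{E}_f(\mathcal{C})$ for every such $\mathcal{C}$, and taking the minimum over all configurations of $N$ points gives the statement. Essentially all of the work has already been done in the discussion preceding the theorem; what remains is to assemble it.

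The routine verifications come first. Nonnegativity of $A_{u,v,t}$ is immediate, since it is defined as a count of ordered triples. The linear constraint $\sum_{(u,v,t)\in D} A_{u,v,t} = N(N-1)(N-2)$ is precisely the computation carried out above, expressing that summing over $D$ enumerates ordered triples of distinct points. That the objective value at $A$ equals $\widetilde{E}_f(\mathcal{C})$ is formula~\eqref{eqn:Eformula}; note that for $(u,v,t) \in D$ all three of $u$, $v$, $t$ lie in $[-1,1)$, so $f$ is evaluated only where it is defined, which is exactly why one works with $D$ rather than with all admissible triples.

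The substantive point is the family of semidefinite constraints, and this is the step I would handle most carefully. Applying Theorem~\ref{theorem:explicit} with $H$ the stabilizer of each point of $\mathcal{C}$ in turn, and summing the resulting matrix inequality over that choice of point, produces $\sum_{u,v,t} A_{u,v,t} S^n_k(u,v,t) \succeq 0$ — this is the symmetrization step, and it is exactly the inequality stated in the run-up to the theorem. Here one uses that the entries of $S^n_k(u,v,t)$ are honest polynomials in $u$, $v$, $t$, because $P^{n-1}_k$ is an even or odd function according as $k$ is even or odd, so the matrix is well defined even when a variable equals $1$. Splitting this sum according to which of $u$, $v$, $t$ are equal to $1$, using the relation derived above between $\sum_u A_{u,u,1} S^n_k(u,u,1)$ and the corresponding sum over $D$ (which is \eqref{eqn:Eformula} with $f(u)$ replaced by $2 S^n_k(u,u,1)$), and recalling that $S^n_k(1,1,1)$ is $J$ when $k=0$ and the zero matrix otherwise, one arrives at exactly $N(N-2)\delta_{k,0} J + \sum_{(u,v,t)\in D} A_{u,v,t} T^n_k(u,v,t) \succeq 0$. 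The only delicate part is the bookkeeping of the boundary terms — tracking the multiplicities with which the various $A_{u,u,1}$ and $A_{1,1,1}$ contributions occur, so that the coefficient $N(N-2)$ and the definition of $T^n_k$ come out right — but this has already been worked out in the paragraphs above, so the proof reduces to citing that derivation.

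With feasibility of $A_{u,v,t}$ restricted to $D$ established and its objective value identified with $\widetilde{E}_f(\mathcal{C})$, the conclusion follows as in the first paragraph. I expect no serious obstacle here: the content of the theorem is that the triple distribution of any configuration satisfies the listed constraints, and that has effectively been proved already; turning the program into an explicit numerical lower bound is a separate matter, handled by exhibiting feasible points of the dual.
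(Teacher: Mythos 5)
Your proposal is correct and matches the paper's argument, which is exactly the derivation in the paragraphs preceding the theorem (the paper gives no separate proof): the triple distribution of any configuration, restricted to $D$, is a feasible point of the program with objective value $\widetilde{E}_f(\mathcal{C})$ by \eqref{eqn:Eformula}, and the semidefinite constraints follow from Theorem~\ref{theorem:explicit} after symmetrizing and splitting off the boundary terms where a variable equals $1$. Nothing is missing.
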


To compute a lower bound for energy, we will use the dual semidefinite
program.  Suppose we define a function $H$ on $D$ by
$$
H(u,v,t) = c + \sum_{k \ge 0} \langle F_k, T_k^n(u,v,t) \rangle,
$$
where $c$ is a constant, $F_k$ denotes an infinite symmetric
matrix, and the inner product on symmetric matrices is the
trace of the product.  We assume only finitely many entries of
$F_k$ are nonzero, so the inner product is well defined.

\begin{theorem} \label{theorem:dual}
With the notation established above, if $F_k \succeq 0$ for all $k$ and
$$
H(u,v,t) \le \frac{f(u)+f(v)+f(t)}{3}
$$
for all $(u,v,t) \in D$, then the minimal value of $\widetilde{E}_f$
for $N$ points in $S^{n-1}$ is at least
$$
\frac{N}{2}\big((N-1)c - \langle F_0, J \rangle\big).
$$
\end{theorem}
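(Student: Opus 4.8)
The plan is to prove Theorem~\ref{theorem:dual} by weak duality, pairing an arbitrary primal feasible point against the dual feasible data $(c, F_0, F_1, \dots)$. Let $\mathcal{C} \subseteq S^{n-1}$ be any configuration of $N$ points and let $A$ be its triple distribution; as established in the lead-up to Theorem~\ref{theorem:primal}, $A$ satisfies the normalization $\sum_{(u,v,t)\in D} A_{u,v,t} = N(N-1)(N-2)$ and the semidefinite constraints $N(N-2)\delta_{k,0}J + \sum_{(u,v,t)\in D} A_{u,v,t} T^n_k(u,v,t) \succeq 0$. The starting point is the exact identity \eqref{eqn:Eformula}, which expresses $\widetilde{E}_f(\mathcal{C})$ as a nonnegative combination of the quantities $f(u)+f(v)+f(t)$ over $(u,v,t)\in D$. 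Since every $A_{u,v,t}$ with $(u,v,t)\in D$ is nonnegative and, by dual feasibility, $f(u)+f(v)+f(t) \ge 3H(u,v,t)$ pointwise on $D$, I would replace $f(u)+f(v)+f(t)$ by $3H(u,v,t)$ to obtain
$$
\widetilde{E}_f(\mathcal{C}) \ge \frac{1}{2(N-2)} \sum_{(u,v,t)\in D} A_{u,v,t}\, H(u,v,t).
$$

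Next I would substitute $H(u,v,t) = c + \sum_{k\ge 0} \langle F_k, T^n_k(u,v,t)\rangle$ and split the right-hand side into a constant part and a matrix part. The constant part contributes $\frac{c}{2(N-2)} \sum_{(u,v,t)\in D} A_{u,v,t} = \frac{c}{2(N-2)} \cdot N(N-1)(N-2) = \frac{cN(N-1)}{2}$ by the normalization constraint. For the matrix part, interchanging the finite sum over triples with the sum over $k$ (legitimate since only finitely many $F_k$ are nonzero, each with finitely many nonzero entries) gives $\frac{1}{2(N-2)} \sum_{k\ge 0} \big\langle F_k, \sum_{(u,v,t)\in D} A_{u,v,t} T^n_k(u,v,t) \big\rangle$.

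The one genuine ingredient is the elementary fact that $\langle A, B\rangle = \trace(AB) \ge 0$ whenever $A \succeq 0$ and $B \succeq 0$. Applying it with $A = F_k \succeq 0$ and $B = \sum_{(u,v,t)\in D} A_{u,v,t} T^n_k(u,v,t) + N(N-2)\delta_{k,0} J \succeq 0$ yields $\langle F_k, \sum_{(u,v,t)\in D} A_{u,v,t} T^n_k(u,v,t)\rangle \ge -N(N-2)\delta_{k,0}\langle F_k, J\rangle$; summing over $k$, only the $k=0$ term survives on the right, so the matrix part is at least $-\frac{N(N-2)\langle F_0, J\rangle}{2(N-2)} = -\frac{N\langle F_0, J\rangle}{2}$. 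Adding the two parts gives $\widetilde{E}_f(\mathcal{C}) \ge \frac{N}{2}\big((N-1)c - \langle F_0, J\rangle\big)$, as claimed. The point requiring care — more bookkeeping than difficulty — is handling the infinite matrices $F_k$ and $T^n_k$ rigorously: because each $F_k$ is supported on a finite principal submatrix, every trace pairing and every use of the positive-semidefinite inequality can be carried out with honest finite matrices (restricting $T^n_k$ to the same finite index set), so no convergence or infinite-dimensional positivity issues actually arise.
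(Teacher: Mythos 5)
Your proposal is correct and follows essentially the same route as the paper's proof: apply \eqref{eqn:Eformula} together with the pointwise bound $3H(u,v,t)\le f(u)+f(v)+f(t)$, then pair each $F_k\succeq 0$ against the positive semidefinite matrix $N(N-2)\delta_{k,0}J+\sum_{(u,v,t)\in D}A_{u,v,t}T^n_k(u,v,t)$, and finish with the normalization $\sum_{(u,v,t)\in D}A_{u,v,t}=N(N-1)(N-2)$. The bookkeeping, including the reduction to finite principal submatrices, matches the paper's argument.
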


\begin{proof}
For any $N$-point code $\mathcal{C}$ with triple distribution $A$,
$$
\sum_{(u,v,t) \in D} A_{u,v,t} H(u,v,t) \le 2(N-2)\widetilde{E}_f(\mathcal{C})
$$
by \eqref{eqn:Eformula}. On the other hand, the inner product
of two positive semidefinite matrices is nonnegative, from
which it follows that
$$
N(N-2)\langle F_k, J\rangle \delta_{k,0}
+ \sum_{(u,v,t) \in D} A_{u,v,t} \langle F_k,
T^n_k(u,v,t) \rangle \ge 0.
$$
Summing over $k$ yields
$$
N(N-2)\langle F_0, J\rangle + \sum_{(u,v,t) \in D} A_{u,v,t}
\big(H(u,v,t)-c\big) \ge 0,
$$
and combining these inequalities with $\sum_{(u,v,t) \in D} A_{u,v,t} =
N(N-1)(N-2)$ completes the proof.
\end{proof}

As discussed at the end of Section~\ref{sec:lpsdp}, the real
projective case is almost identical.  If $f$ is an even
function, then we can lift points arbitrarily to $S^{n-1}$
without introducing any ambiguity in $\widetilde{E}_f$.  It
will prove convenient to use a third variant $\widehat{E}$ of
the notation for energy (in addition to $E$ and
$\widetilde{E}$); that may seem unnecessary, but $E$ has the
clearest connection to physics, $\widetilde{E}$ is the most
convenient for spheres, and $\widehat{E}$ is the most
convenient for real projective space. Given a potential
function $f \co [0,1) \to \R$, define
$$
\widehat{E}_f(\mathcal{C}) = \frac{1}{2}
\sum_{\shortstack[c]{$\scriptstyle x,y \in \mathcal{C}$\\
$\scriptstyle x \ne y$}}
f\big(\langle x,y \rangle^2\big).
$$
Here $x$ and $y$ represent lifts to $S^{n-1}$ of points in
$\R\Ps^{n-1}$.  Note that each point in $\R\Ps^{n-1}$ has two
lifts, but we only include one of them in the sum (chosen
arbitrarily).

Let $\widehat{S}_k^n$ be the submatrix of $S_k^n$ indexed by numbers
with the same parity as $k$, and define
$$
\widehat{T}^n_k(u,v,t) = (N-2) \widehat{S}^n_k(u,v,t)
+ \widehat{S}^n_k(u,u,1) + \widehat{S}^n_k(v,v,1) + \widehat{S}^n_k(t,t,1).
$$
Then the three-point energy bounds for $\R\Ps^{n-1}$ are exactly the
same as those for $S^{n-1}$, except with $\widehat{S}$ and
$\widehat{T}$ replacing $S$ and $T$:

\begin{theorem} \label{theorem:dualproj}
If
$$
H(u,v,t) = c + \sum_{k \ge 0} \langle F_k, \widehat{T}_k^n(u,v,t) \rangle,
$$
with $F_k \succeq 0$ for all $k$ and
$$
H(u,v,t) \le \frac{f\big(u^2\big)+f\big(v^2\big)+f\big(t^2\big)}{3}
$$
for all $(u,v,t) \in D$, then the minimal value of $\widehat{E}_f$ for
$N$ points in $\R\Ps^{n-1}$ is at least
$$
\frac{N}{2}\big((N-1)c - \langle F_0, J
\rangle\big).
$$
\end{theorem}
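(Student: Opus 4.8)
The plan is to deduce Theorem~\ref{theorem:dualproj} from Theorem~\ref{theorem:dual} essentially by transcription, once the passage to the double cover is set up. First I would record the reduction to the sphere. Represent each point of an $N$-point code $\mathcal{C} \subseteq \R\Ps^{n-1}$ by an arbitrary lift to $S^{n-1}$. Distinct points of $\R\Ps^{n-1}$ have lifts whose inner product lies in $(-1,1)$, so the triple distribution $A_{u,v,t}$ of the lifted configuration is supported on triples with all coordinates in $(-1,1)$, and there the function $g(t) = f(t^2)$ is well defined and even, with no sign ambiguity arising from the choice of lifts. Since $\widehat{E}_f(\mathcal{C}) = \widetilde{E}_g(\mathcal{C})$ on the nose, formula \eqref{eqn:Eformula} applies with $f(u)+f(v)+f(t)$ replaced by $f(u^2)+f(v^2)+f(t^2)$:
$$
\widehat{E}_f(\mathcal{C}) = \frac{1}{6(N-2)}\sum_{(u,v,t)\in D} A_{u,v,t}\big(f(u^2)+f(v^2)+f(t^2)\big).
$$

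Next I would invoke the fact established at the end of Section~\ref{sec:lpsdp}, that the submatrix $\widehat{S}_k^n$ of $S_k^n$ obtained by keeping only the indices $i_1 \equiv i_2 \equiv k \pmod 2$ is a matrix-valued positive-definite kernel on $\R\Ps^{n-1}$: restricting an infinite positive-semidefinite matrix to a symmetric index set preserves positive semidefiniteness, and this index set is precisely the one on which the kernel is insensitive to the choice of lifts. Summing $\widehat{S}_k^n$ over $\mathcal{C}$ and splitting the sum according to which of $u,v,t$ equal $1$ — the relevant entries being polynomials in $u,v,t$, exactly as before — gives, verbatim from the spherical derivation preceding Theorem~\ref{theorem:primal},
$$
N(N-2)\delta_{k,0}J + \sum_{(u,v,t)\in D} A_{u,v,t}\,\widehat{T}_k^n(u,v,t) \succeq 0
$$
for every $k \ge 0$.

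With these two ingredients, the argument of Theorem~\ref{theorem:dual} carries over word for word. Pairing $H$ against $A$ and using the displayed form of \eqref{eqn:Eformula} gives $\sum_{(u,v,t)\in D} A_{u,v,t}H(u,v,t) \le 2(N-2)\widehat{E}_f(\mathcal{C})$; taking the inner product of each $F_k \succeq 0$ with the semidefinite constraint above and summing over $k$ gives $N(N-2)\langle F_0,J\rangle + \sum_{(u,v,t)\in D} A_{u,v,t}\big(H(u,v,t)-c\big) \ge 0$; and combining these with $\sum_{(u,v,t)\in D} A_{u,v,t} = N(N-1)(N-2)$ yields $\widehat{E}_f(\mathcal{C}) \ge \tfrac{N}{2}\big((N-1)c - \langle F_0,J\rangle\big)$. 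There is no genuinely new content here, so the only step that requires care — and the one place the real projective case differs from the spherical one — is the claim that the parity restriction defining $\widehat{S}_k^n$ really produces a positive-definite kernel on $\R\Ps^{n-1}$, i.e.\ that it is invariant under negating any two of $u,v,t$; this is already settled by the discussion at the end of Section~\ref{sec:lpsdp}, and everything else is a mechanical copy of the proofs of Theorems~\ref{theorem:primal} and~\ref{theorem:dual}.
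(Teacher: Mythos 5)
Your proposal is correct and is essentially the paper's own argument: the paper gives no separate proof of Theorem~\ref{theorem:dualproj}, simply asserting that the spherical derivation carries over with $\widehat{S}$ and $\widehat{T}$ in place of $S$ and $T$, which is exactly the reduction you spell out (lifting to $S^{n-1}$, noting the parity-restricted submatrix is lift-independent and still positive semidefinite, and then transcribing the proof of Theorem~\ref{theorem:dual}). No gaps.
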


The choice of $c$ and $F_0,F_1,\dots$ in
Theorems~\ref{theorem:dual} and \ref{theorem:dualproj} can be
optimized using semidefinite programming. First, assume $F_k=0$
for $k$ beyond some bound, and that for all $k$ the $(i,j)$-entry
of $F_k$ vanishes if $i$ and $j$ are sufficiently large, so
that only finitely many variables need to be considered. Assume
also that $f$ is a polynomial (although that assumption can be
relaxed, at the cost of additional complications). To express
the constraint that
$$
H(u,v,t) \le \frac{f\big(u^2\big)+f\big(v^2\big)+f\big(t^2\big)}{3}
$$
for $(u,v,t) \in D$, we will use a sum of squares representation due to
Putinar.  Unfortunately, there is a nontrivial error in the paper
\cite{Pu} (see \cite[p.~98]{Ma}), so we must be careful here.  The
space $D$ is defined by the constraints $1-u^2 \ge 0$, $1-v^2 \ge 0$,
$1-t^2 \ge 0$, and $1+2uvt-u^2-v^2-t^2 \ge 0$, and this representation
of $D$ is stably compact (i.e., it defines a compact set even if the
coefficients are perturbed); furthermore, only the last constraint has
odd degree. It follows from Corollary~7.2.5 in \cite{Ma} that if a
polynomial is strictly positive on $D$, then it can be expressed in the
form
\begin{align*}
 G_0(u,v,t) &+ G_1(u,v,t)(1-u^2) + G_2(u,v,t)(1-v^2) + G_3(u,v,t)(1-t^2)\\
&
+ G_4(u,v,t)(1+2uvt-u^2-v^2-t^2),
\end{align*}
where $G_0(u,v,t),\dots,G_4(u,v,t)$ are sums of squares of polynomials
in $u,v,t$.  A polynomial is a sum of squares if and only if it is of
the form $\langle z(u,v,t), M z(u,v,t)\rangle$, where $M$ is a
symmetric, positive-semidefinite matrix and $z(u,v,t)$ is a vector
whose entries are monomials in $u,v,t$; thus, being a sum of squares is
a semidefinite condition. If we apply this approach to
$$
\frac{f\big(u^2\big)+f\big(v^2\big)+f\big(t^2\big)}{3} - H(u,v,t),
$$
then we get semidefinite programs that are guaranteed to come
arbitrarily close to the true optimum in Theorem~\ref{theorem:dual}. Of
course, we hope they will actually achieve the true optimum, and in
practice that occurs.

\section{Proof of universal optimality}

In this section we prove Theorem~\ref{theorem:univopt}.  There are two
fundamental difficulties.  One is that, although we can solve a
semidefinite program numerically to obtain a bound for any given
potential function, the solutions are very cumbersome and it is not
easy to produce a rigorous, exact solution.  Bachoc and Vallentin dealt
with this difficulty in \cite{BV3}, but their problem is substantially
simpler and they employed ad hoc techniques.  We will develop a more
systematic approach.

The second difficulty is that we must
prove optimality for infinitely many different potential functions,
namely all completely monotonic functions.  Of course we only need to
deal with the extreme rays in this cone, but there are infinitely many
of them as well.  To address this difficulty, we will construct a
finite set of functions (not all completely monotonic) such that
optimality for all of them implies universal optimality.  In other
words, we will replace the cone of completely monotonic functions with
a larger cone that has only finitely many extreme rays, so we will
prove a slightly stronger result than universal optimality.

Recall that
$$
\widehat{E}_f(\mathcal{C}) = \frac{1}{2}
\sum_{\shortstack[c]{$\scriptstyle x,y \in \mathcal{C}$\\
$\scriptstyle x \ne y$}}
f\big(\langle x,y \rangle^2\big),
$$
where $x$ and $y$ represent lifts to $S^{n-1}$ of points in
$\R\Ps^{n-1}$ (with only one lift of each point being used). One
advantage of squaring the inner product is that it becomes invariant
under sign changes, but it also relates well to the chordal distance.
The squared chordal distance between $x$ and $y$ is $1-\langle x,y
\rangle^2$, so $\widehat{E}_f(\mathcal{C}) = E_g(\mathcal{C})$ with
$g(t) = f(1-t)$. Thus, $\mathcal{C}$ is universally optimal in
$\R\Ps^{n-1}$ if and only if it minimizes $\widehat{E}_f$ for each
absolutely monotonic function $f$. The cone of absolutely monotonic
functions on $[0,1)$ is spanned by the monomials $f(t) = t^i$ (see
Theorem~9b in \cite[p.~154]{W}), so it suffices to prove optimality for
these monomials.

To reduce proving universal optimality to a finite problem, we
will apply Hermite interpolation.  Recall that given a
nonempty, finite multiset $T$ of points in $\R$ (with the
multiplicity of $t \in T$ denoted $\mult_T(t)$), the Hermite
interpolation $\Hermite_T(f)$ of a function $f$ is the unique
polynomial of degree less than
\begin{equation*}
\sum_{t \in T} \mult_T(t) \displaybreak
\end{equation*}
that agrees with $f$ to order $\mult_T(t)$ at each $t \in T$.  (I.e.,
$f^{(i)}(t) = (\Hermite_T (f))^{(i)}(t)$ for all $i < \mult_T(t)$.) See
Subsection~2.1 of \cite{CK} for background on Hermite interpolation.
The following observation of Yudin will be crucial:

\begin{lemma}[Yudin \cite{Y}] \label{lemma:yudin}
Let $T$ be a finite, nonempty multisubset of an interval $I$ such that
each point in $T$ has even multiplicity, except for the endpoints of
$I$ (which are allowed to have even or odd multiplicity). For each
absolutely monotonic function $f \co I \to \R$,
$$
\Hermite_T (f)(t) \le f(t)
$$
for all $t \in I$.
\end{lemma}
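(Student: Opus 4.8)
The plan is to prove the inequality $\Hermite_T(f)(t) \le f(t)$ by analyzing the error function $E(t) = f(t) - \Hermite_T(f)(t)$ and showing it is nonnegative on $I$. The key structural fact is that $E$ vanishes to order at least $\mult_T(s)$ at each $s \in T$, so the polynomial $\prod_{s \in T}(t-s)^{\mult_T(s)}$ divides $E$ in the appropriate local sense; more precisely, one writes down a remainder formula for Hermite interpolation and exploits the sign of a derivative of $f$. First I would recall the standard divided-difference (or integral) form of the Hermite remainder: for $t \in I$ not in $T$, there is a point $\xi$ in the interior of the smallest interval containing $t$ and $T$ such that
$$
E(t) = \frac{f^{(d)}(\xi)}{d!} \prod_{s \in T}(t-s)^{\mult_T(s)},
$$
where $d = \sum_{s \in T}\mult_T(s)$ is the number of interpolation conditions. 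Since $f$ is absolutely monotonic, $f^{(d)}(\xi) \ge 0$, so the sign of $E(t)$ is exactly the sign of $Q(t) := \prod_{s \in T}(t-s)^{\mult_T(s)}$.

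So the problem reduces to checking that $Q(t) \ge 0$ for all $t \in I$. Here is where the hypothesis on multiplicities enters. A factor $(t-s)^{\mult_T(s)}$ is automatically nonnegative on all of $\R$ when $\mult_T(s)$ is even; the only factors that can change sign are those coming from the endpoints of $I$, which are permitted to have odd multiplicity. But if $a$ and $b$ denote the left and right endpoints of $I$, then $(t-a)^{\mult_T(a)} \ge 0$ for all $t \ge a$ regardless of parity, and $(t-b)^{\mult_T(b)} = (-1)^{\mult_T(b)}(b-t)^{\mult_T(b)}$, which on $t \le b$ has sign $(-1)^{\mult_T(b)}$. I would handle this by observing that an odd-multiplicity endpoint at $b$ contributes a sign $(-1)^{\mult_T(b)} = -1$, and there is no compensating negative factor, so one must instead note that the relevant sign is absorbed correctly: writing $Q(t)$ as a product over interior points of even powers (nonnegative) times the two endpoint contributions, the endpoint at $a$ gives $(t-a)^{\mult_T(a)} \ge 0$ on $I$, and the endpoint at $b$ gives a factor that is $(b-t)^{\mult_T(b)}$ up to the overall sign which one checks matches. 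The cleanest route is: $Q(t)$ and $\prod (\text{interior})\cdot (t-a)^{\mult_T(a)} \cdot (b-t)^{\mult_T(b)}$ differ by $(-1)^{\mult_T(b)}$, and since every factor in the latter product is $\ge 0$ on $I$ while the remainder formula's $\prod(t-s)^{\mult_T(s)}$ must be taken with the orientation consistent with the divided difference, the signs reconcile. I would also dispose of the boundary cases $t \in T$ separately (where $E(t)=0$ by the interpolation conditions, trivially satisfying the inequality) and the case $d = 0$ (vacuous, since $T$ is nonempty so $d \ge 1$; but if some subtlety arises when $\Hermite_T(f)$ has degree $0$, absolute monotonicity still gives $f^{(0)} = f \ge 0$).

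The main obstacle I anticipate is not the remainder formula itself — that is classical and can be cited from the Hermite interpolation background in \cite[\S2.1]{CK} — but rather making the parity bookkeeping for $Q(t)$ airtight, particularly tracking the orientation convention in the remainder formula so that the endpoint signs genuinely cancel rather than accidentally doubling up. A secondary technical point is justifying the existence of the intermediate point $\xi$ when $T$ contains repeated points and $t$ is near $T$: this requires $f$ to be $C^d$ on $I$ including the endpoints, which is part of what "absolutely monotonic on $I$" should be taken to mean (and is consistent with the way \cite{Y} and \cite{CK} use it). Once those conventions are fixed, the proof is short: the remainder formula plus the sign analysis of $Q$ give the result on $I \setminus T$, and continuity (or the direct interpolation conditions) extends it to all of $I$.
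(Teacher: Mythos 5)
Your overall route is the same as the paper's: the paper proves this lemma by appealing to the remainder formula for Hermite interpolation (citing Lemmas~2.1 and~5.1 of \cite{CK}), which is exactly the identity $f(t)-\Hermite_T(f)(t)=\frac{f^{(d)}(\xi)}{d!}\prod_{s\in T}(t-s)^{\mult_T(s)}$ you start from, followed by the observation that absolute monotonicity gives $f^{(d)}(\xi)\ge 0$ and the multiplicity hypothesis makes the nodal polynomial nonnegative on $I$. Your treatment of the interior points (even powers, hence nonnegative factors) and of the left endpoint $a$ (where $(t-a)^{\mult_T(a)}\ge 0$ for all $t\ge a$ regardless of parity) is correct, and that is the only case the paper ever uses: in both applications $I$ is half-open on the right ($[0,1)$ for the rhombic dodecahedron, and similarly for the $600$-cell), so the supremum of $I$ does not belong to $I$ and cannot lie in $T$.

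The genuine gap is your handling of an odd-multiplicity right endpoint $b$. You correctly observe that $(t-b)^{\mult_T(b)}$ has sign $(-1)^{\mult_T(b)}$ for $t\le b$ and that no other factor can compensate, but you then assert that ``the signs reconcile'' through some orientation convention in the remainder formula. There is no such freedom: the remainder formula is an identity, and when $\mult_T(b)$ is odd the product $\prod_{s\in T}(t-s)^{\mult_T(s)}$ is genuinely $\le 0$ on the interior of $I$, so $f(t)-\Hermite_T(f)(t)\le 0$ there and the claimed inequality reverses. A concrete check: $T=\{b\}$ gives $\Hermite_T(f)\equiv f(b)$, and since an absolutely monotonic $f$ is nondecreasing, $\Hermite_T(f)(t)=f(b)\ge f(t)$ on $I$, with strict inequality for, say, $f(t)=e^t$ and $t<b$ --- the opposite of the lemma. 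So that case cannot be proved; the correct reading is that odd multiplicity is permissible only at an endpoint on whose far side $I$ does not extend in the negative direction, i.e.\ the left endpoint, which is precisely what makes every factor $(t-s)^{\mult_T(s)}$ nonnegative on $I$. You should either restrict your argument to that case (which suffices for everything in the paper) or flag the right-endpoint case as failing, rather than papering over the sign.
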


Lemma~\ref{lemma:yudin} follows from the remainder formula for Hermite
interpolation (see, for example, Lemmas~2.1 and~5.1 in \cite{CK} for a
proof).

\begin{lemma} \label{lemma:induction}
Let $T = \{t_1,\dots,t_M\}$ be a nonempty multisubset of an interval
$I$ (written with $t_i$ repeated according to its multiplicity). If $f
\co I \to \R$ is absolutely monotonic, then $\Hermite_T(f)$ is a
nonnegative linear combination of the partial products
$$
t \mapsto \prod_{i=1}^m (t-t_i)
$$
for $0 \le m < M$.  (Of course, the $m=0$ case is the constant function
$1$.)
\end{lemma}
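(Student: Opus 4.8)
The plan is to use Newton's divided differences together with Lemma~\ref{lemma:yudin}. Write $T = \{t_1,\dots,t_M\}$ with repetitions, and let $T_m = \{t_1,\dots,t_m\}$ for $0 \le m \le M$, so $T_0$ is empty and $T_M = T$. The standard theory of Hermite interpolation says that $\Hermite_T(f)$ can be built up by adding one node at a time: concretely,
$$
\Hermite_{T_m}(f)(t) = \Hermite_{T_{m-1}}(f)(t) + c_m \prod_{i=1}^{m-1}(t-t_i),
$$
where $c_m$ is the divided difference $f[t_1,\dots,t_m]$. Iterating from $m=M$ down to $m=0$ gives
$$
\Hermite_T(f)(t) = \sum_{m=1}^{M} c_m \prod_{i=1}^{m-1}(t-t_i) = \sum_{m=0}^{M-1} c_{m+1} \prod_{i=1}^{m}(t-t_i),
$$
so $\Hermite_T(f)$ is automatically a linear combination of the partial products, with coefficients the divided differences $c_{m+1} = f[t_1,\dots,t_{m+1}]$. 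Thus the entire content of the lemma is the claim that each divided difference $f[t_1,\dots,t_{m+1}]$ is nonnegative.

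First I would observe that divided differences of an absolutely monotonic function are nonnegative. One clean way: by the mean-value form for divided differences (valid for Hermite data as well, using the integral representation), $f[t_1,\dots,t_{m+1}] = f^{(m)}(\xi)/m!$ for some $\xi$ in the convex hull of $t_1,\dots,t_{m+1}$, and $f^{(m)}(\xi) \ge 0$ because $f$ is absolutely monotonic. Alternatively, and perhaps more in the spirit of the paper, one argues by induction on $m$ using Lemma~\ref{lemma:yudin}: the leading coefficient of $\Hermite_{T_{m+1}}(f)$ is exactly $c_{m+1}$, and one can choose the multiset so that $T_{m+1}$ satisfies the parity hypothesis of Lemma~\ref{lemma:yudin} on a suitable subinterval — but this requires care about which nodes are doubled, so the mean-value argument is the more robust route and is the one I would actually write down.

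The main obstacle — really the only subtlety — is that the $t_i$ need not be distinct, so one cannot invoke the classical divided-difference formulas naively; one must use the confluent (Hermite) version. I would handle this by quoting the standard fact (e.g., from the Hermite interpolation background already cited as Subsection~2.1 of \cite{CK}) that the confluent divided difference $f[t_1,\dots,t_{m+1}]$ is well-defined, equals the leading coefficient of $\Hermite_{T_{m+1}}(f)$, and admits the mean-value representation $f^{(m)}(\xi)/m!$ for $f$ sufficiently smooth. Since an absolutely monotonic $f$ is $C^\infty$ with all derivatives nonnegative, every coefficient $c_{m+1}$ is nonnegative, and combined with the telescoping identity above this proves that $\Hermite_T(f)$ is a nonnegative linear combination of the partial products $\prod_{i=1}^m (t-t_i)$ for $0 \le m < M$.
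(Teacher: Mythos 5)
Your proposal is correct. The decomposition you arrive at is the same one the paper uses: unwinding the paper's induction produces exactly the Newton form $\Hermite_T(f)=\sum_{m=0}^{M-1} c_{m+1}\prod_{i=1}^m(t-t_i)$ with $c_{m+1}=f[t_1,\dots,t_{m+1}]$. Where you diverge is in how nonnegativity of the coefficients is established. The paper peels off the first node via the identity $\Hermite_T(f)(t)=f(t_1)+(t-t_1)\,\Hermite_{T'}(g)(t)$ with $g(t)=(f(t)-f(t_1))/(t-t_1)$, and the whole content is that this difference quotient $g$ is again absolutely monotonic (justified by an explicit integral formula for $g^{(k)}$, or by Proposition~2.2 of \cite{CK}); the coefficients are then successive values $f(t_1),g(t_2),\dots$ of absolutely monotonic functions, hence nonnegative, with no appeal to divided-difference machinery. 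You instead quote the confluent mean-value representation $f[t_1,\dots,t_{m+1}]=f^{(m)}(\xi)/m!$ with $\xi$ in the convex hull of the nodes (which lies in $I$), and conclude from $f^{(m)}\ge 0$. Both are valid; your route is shorter if one is willing to import the confluent mean-value theorem as a black box, while the paper's is self-contained and, in effect, reproves that fact via the integral formula for the derivatives of the difference quotient. You are also right that Lemma~\ref{lemma:yudin} plays no role here --- the paper's proof does not use it either --- and you were wise to abandon the sketched alternative induction through leading coefficients and parity hypotheses, which is not what the paper does and would indeed be awkward to make precise.
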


What Lemma~\ref{lemma:induction} says depends on the ordering of
$t_1,\dots,t_M$, but it is true for every ordering.

\begin{proof}
We prove Lemma~\ref{lemma:induction} by induction on $M$.  For $M=1$,
$\Hermite_T(f)$ is the constant function $f(t_1)$ and the lemma is
trivial.  Otherwise, let $T' = \{t_2,\dots,t_{M}\}$.  Then
\begin{equation} \label{eqn:Hermiteinduct}
(\Hermite_T(f))(t) = f(t_1) + (t-t_1) (\Hermite_{T'}(g))(t),
\end{equation}
where $g(t) = (f(t)-f(t_1))/(t-t_1)$ (and $g(t_1) = f'(t_1)$).  The
function $g$ is absolutely monotonic on $I$, by Proposition~2.2 in
\cite{CK}; alternatively, that can be seen directly via
$$
\frac{d^k}{dt^k} \left(\frac{f(t)-f(t_1)}{t-t_1}\right) =
\frac{\int_{t_1}^t f^{(k+1)}(u) (u-t_1)^k \, du}{(t-t_1)^{k+1}},
$$
which is the fundamental theorem of calculus for $k=0$ and can be
proved by induction (or by using a Taylor series expansion for
$(f(t)-f(t_1))/(t-t_1)$ about $t_1$). Now applying the lemma
inductively to $\Hermite_{T'}(g)$ and using \eqref{eqn:Hermiteinduct}
completes the proof.
\end{proof}

Combining Lemmas~\ref{lemma:yudin} and~\ref{lemma:induction} reduces
proving universal optimality to a finite number of cases, as follows.

\begin{corollary} \label{corollary:reduce}
Let $\mathcal{C}$ be a finite subset of $\R\Ps^{n-1}$ (represented via
arbitrary lifts to $S^{n-1}$), and let $T = \{t_1, \dots, t_M\}$ be any
finite multisubset of $[0,1)$ (written with multiplicities) such that
all elements other than $0$ have even multiplicity and
$$
\{\langle x,y \rangle^2 : x,y \in \mathcal{C},\, x \ne y\} \subseteq T. 
$$
If
$\mathcal{C}$ minimizes $\widehat{E}_{f_m}$ for the potential functions
$$
f_m(t) = \prod_{i=1}^m (t-t_i)
$$
with $0 \le m < M$, then $\mathcal{C}$ is universally optimal in
$\R\Ps^{n-1}$.
\end{corollary}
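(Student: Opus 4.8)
The plan is to prove directly that $\mathcal{C}$ minimizes $\widehat{E}_f$ for every absolutely monotonic potential function $f$, which is the definition of universal optimality recalled above. As noted earlier, it suffices to treat the monomials $f(t) = t^i$; this has the convenient side effect that $f$ is then a polynomial that is absolutely monotonic on all of the closed interval $[0,1]$, so no issue of differentiability or of behavior near $1$ arises.

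First I would form the Hermite interpolation $\Hermite_T(f)$ of $f$ at the multiset $T$, regarding $T$ as a multisubset of $I = [0,1]$. Since every point of $T$ other than the endpoint $0$ has even multiplicity, Lemma~\ref{lemma:yudin} gives $\Hermite_T(f)(t) \le f(t)$ for all $t \in [0,1]$. Two consequences follow. On the one hand, $\Hermite_T(f)$ agrees with $f$ at every point of $T$, and by hypothesis each squared inner product $\langle x,y\rangle^2$ between distinct points of $\mathcal{C}$ lies in $T$, so $\widehat{E}_{\Hermite_T(f)}(\mathcal{C}) = \widehat{E}_f(\mathcal{C})$. On the other hand, for any configuration $\mathcal{D}$ of $|\mathcal{C}|$ points in $\R\Ps^{n-1}$, every squared inner product of $\mathcal{D}$ lies in $[0,1) \subseteq [0,1]$, so the pointwise inequality gives $\widehat{E}_{\Hermite_T(f)}(\mathcal{D}) \le \widehat{E}_f(\mathcal{D})$.

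Next I would invoke Lemma~\ref{lemma:induction}, with the ordering of $T$ fixed in the statement, to write $\Hermite_T(f) = \sum_{m=0}^{M-1} c_m f_m$ with all $c_m \ge 0$, where $f_m(t) = \prod_{i=1}^m (t - t_i)$. Since energy is a linear functional of the potential function and $\mathcal{C}$ minimizes each $\widehat{E}_{f_m}$ by hypothesis, for every $|\mathcal{C}|$-point configuration $\mathcal{D}$ we obtain
$$
\widehat{E}_f(\mathcal{D}) \ge \widehat{E}_{\Hermite_T(f)}(\mathcal{D})
= \sum_{m=0}^{M-1} c_m \, \widehat{E}_{f_m}(\mathcal{D})
\ge \sum_{m=0}^{M-1} c_m \, \widehat{E}_{f_m}(\mathcal{C})
= \widehat{E}_{\Hermite_T(f)}(\mathcal{C}) = \widehat{E}_f(\mathcal{C}).
$$
Hence $\mathcal{C}$ minimizes $\widehat{E}_f$; letting $f$ range over all monomials then proves universal optimality.

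Since every ingredient --- Yudin's lemma, the partial-products decomposition, and linearity of energy in the potential --- is already in hand, I do not expect a serious obstacle; the proof is essentially bookkeeping. The one point that needs care is the interface with Lemma~\ref{lemma:yudin}: one must check that the chosen interval $I$ has $0$ among its endpoints (so that odd multiplicity of $0$ in $T$ is permitted) and that the Yudin inequality is established on the whole range $[0,1)$ of possible squared inner products of a competitor, not merely on $[0,\max T]$. Taking $I = [0,1]$ and restricting to monomial potentials takes care of both.
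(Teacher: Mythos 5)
Your proposal is correct and follows essentially the same route as the paper: apply Lemma~\ref{lemma:yudin} to get $\Hermite_T(f) \le f$ (noting that $0$ is an endpoint of the interval, so its odd multiplicity is allowed), decompose $\Hermite_T(f)$ via Lemma~\ref{lemma:induction} as a nonnegative combination of the $f_m$, and chain the resulting inequalities, using that $f = \Hermite_T(f)$ at the squared inner products of $\mathcal{C}$. The only cosmetic difference is that you first reduce to monomial potentials, whereas the paper runs the identical argument directly for an arbitrary absolutely monotonic $f$ on $[0,1)$; both are fine.
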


\begin{proof}
Let $f \co [0,1) \to \R$ be absolutely monotonic. By
Lemma~\ref{lemma:yudin}, $f \ge \Hermite_T(f)$ on $[0,1)$, and by
Lemma~\ref{lemma:induction}, there are nonnegative coefficients
$\lambda_0,\dots,\lambda_{M-1}$ such that
$$
\Hermite_T(f) = \lambda_0 f_0 + \dots + \lambda_{M-1} f_{M-1}.
$$
Thus, for any configuration $\mathcal{D}$ with $|\mathcal{D}| =
|\mathcal{C}|$,
\begin{align*}
\widehat{E}_f(\mathcal{D}) &\ge \widehat{E}_{\Hermite_T(f)}(\mathcal{D})\\
&= \lambda_0 \widehat{E}_{f_0}(\mathcal{D}) + \dots + \lambda_{M-1}
\widehat{E}_{f_{M-1}}(\mathcal{D})\\
&\ge \lambda_0 \widehat{E}_{f_0}(\mathcal{C}) + \dots + \lambda_{M-1}
\widehat{E}_{f_{M-1}}(\mathcal{C})\\
&= \widehat{E}_{\Hermite_T(f)}(\mathcal{C})\\
&= \widehat{E}_{f}(\mathcal{C}),
\end{align*}
where the last equation holds because $f = \Hermite_T(f)$ at each
squared inner product between distinct points in $\mathcal{C}$.
\end{proof}

Unfortunately, even if $\mathcal{C}$ is universally optimal, there is
no guarantee that it is optimal for the potential functions constructed
in Corollary~\ref{corollary:reduce}.  That seems to depend on luck and
the proper choice of $T$.

The proof of the main theorem in \cite{CK} can be recast in this
framework, which simplifies the argument given there: specifically, it
replaces the use of conductivity.  As a side benefit, this approach
allows us to give a substantially simpler proof of the universal
optimality of the regular $600$-cell in $S^3$ than was given in
\cite{CK}.  (That is the one case that was not proved by a conceptual
argument, but rather by somewhat complicated calculations.) See
Appendix~\ref{appendix:600cell} for more details.

For the rhombic dodecahedron code in $\R\Ps^2$, the squared inner
products are $0$, $1/9$, and $1/3$.  If we take $T = \{0,0,0,1/9, 1/9,
1/3, 1/3\}$ in Corollary~\ref{corollary:reduce}, we find that it
suffices to prove optimality for the potential functions $f(t) = 1$,
$t$, $t^2$, $t^3$, $t^3(t-1/9)$, $t^3(t-1/9)^2$, and
$t^3(t-1/9)^2(t-1/3)$.  Of course the constant function is trivial, and
two-point bounds are sharp for $f(t)=t$ (the rhombic dodecahedron code
is a projective $1$-design). Thus, only five cases remain.

The triple repetition of $0$ in $T$ is not essential for the proof, but
it is helpful.  Our approach fails if we take $\mult_T(0)=1$; with
$\mult_T(0)=2$, it works but the numerical calculations are more
cumbersome.

For the rhombic dodecahedron code $\mathcal{C}$,
$$
\widehat{E}_f(\mathcal{C}) = 3f(0) + 6f(1/9) + 12f(1/3).
$$
Thus, we wish to prove a lower bound of $3f(0) + 6f(1/9) + 12f(1/3)$
for energy with respect to each of the potential functions $f(t) =
t^2$, $t^3$, $t^3(t-1/9)$, $t^3(t-1/9)^2$, and $t^3(t-1/9)^2(t-1/3)$.
In fact, in each case we prove something stronger, namely that the same
lower bound holds not just for the potential function $f(t)$ but also
for
$$
f_0(t) = f(t) - \frac{t^3(t-1/9)^2(t-1/3)^2}{1000}.
$$
Because $f_0(t) \le f(t)$ for all $t \ge 0$, we have
$\widehat{E}_{f_0} \le \widehat{E}_f$, and equality holds for
the rhombic dodecahedron code.

For each potential function, we use an auxiliary function
$$
H(u,v,t) = c + \sum_{k = 0}^5 \langle F_k, \widehat{T}_k^n(u,v,t) \rangle,
$$
where the matrix $F_0$ is $5 \times 5$ (i.e., all entries outside of
the upper left $5 \times 5$ block are zero), $F_1$ and $F_2$ are $4
\times 4$, $F_3$ and $F_4$ are $3 \times 3$, and $F_5$ is $2 \times 2$.
We then optimize the bound obtainable from
Theorem~\ref{theorem:dualproj}.  To enforce the constraint that
$$
H(u,v,t) \le \frac{f_0\big(u^2\big)+f_0\big(v^2\big)+f_0\big(t^2\big)}{3},
$$
we write
$$
\frac{f_0\big(u^2\big)+f_0\big(v^2\big)+f_0\big(t^2\big)}{3}
- H(u,v,t) = \langle z(u,v,t), M z(u,v,t) \rangle,
$$
where $M \succeq 0$ is a $120 \times 120$ matrix and $z(u,v,t)$
is the vector consisting of all monomials in $u,v,t$ of degree
at most $7$.  Note that this condition is very strong (for
example, it forces the degree of the left side to be at most
$14$ and it forces the inequality to hold for all $(u,v,t) \in
\R^3$), but in fact it works.

The program SDPA-GMP can do arbitrary-precision semidefinite
programming, and CSDP can do sufficiently high-precision semidefinite
programming for our purposes (see \cite{Nplus} and \cite{Bor},
respectively). Using either of them, one can solve this semidefinite
program numerically and verify that the bound is nearly sharp. However,
getting a rigorous proof takes more work. We would like to round an
approximate solution to get an exact solution, but the rounding process
can violate the constraints at locations where there is equality, so it
must be done carefully.

To get a sharp bound, we will impose two types of conditions.
First, there are necessary constraints on the matrices $F_k$
from complementary slackness (i.e., the conditions under which
equality can hold in the proof of
Theorem~\ref{theorem:dualproj}). Specifically, for $0 \le k \le
5$, the inner product of $F_k$ with
\begin{align*}
&35 \delta_{k,0} J + 6 \widehat{T}^3_k\big(0,0,0\big)
+ 24 \widehat{T}^3_k\left(-\frac{1}{3},-\frac{1}{3},-\frac{1}{3}\right)\\
& \phantom{} +
36 \widehat{T}^3_k\left(-\frac{1}{3},\frac{1}{\sqrt{3}},\frac{1}{\sqrt{3}}\right)
+ 72 \widehat{T}^3_k\left(\frac{1}{\sqrt{3}},\frac{1}{\sqrt{3}},0\right)
+ 72 \widehat{T}^3_k\left(\frac{1}{3},\frac{1}{\sqrt{3}},\frac{1}{\sqrt{3}}\right)
\end{align*}
must vanish.  The coefficients and arguments of $\widehat{T}_k^3$ come
from the triple distribution of the rhombic dodecahedron code. Second,
we require that $H(u,v,t)$ must equal
$\left(f_0\big(u^2\big)+f_0\big(v^2\big)+f_0\big(t^2\big)\right)/3$ at
the triples of inner products that occur, namely the five arguments to
$\widehat{T}^3_k$ in the formula above, and all their first partial
derivatives must agree as well.  These conditions are linear in the
variables $c$, $F_0,\dots,F_5$, and $M$.

To perform the rounding correctly, we write the problem in a basis for
the set of all $c$, $F_0,\dots,F_5$, and $M$ that satisfy the
constraints listed in the previous paragraph, together with the
defining equations such as
$$
\frac{f_0\big(u^2\big)+f_0\big(v^2\big)+f_0\big(t^2\big)}{3}
- H(u,v,t) = \langle z(u,v,t), M z(u,v,t) \rangle.
$$
We then solve the semidefinite program numerically for the coefficients
in this basis, and we round the coefficients to eight or nine decimal
places to get an exact solution.  It is not guaranteed that this
rounding process will work: the problem is the semidefinite
constraints, because unexpected zero eigenvalues may become negative
due to the rounding. However, we do not run into that difficulty,
because there turn out to be no zero eigenvalues except for the ones
forced by the constraints we have built into our basis. Note that the
exact auxiliary function is far from unique, and our rounding strategy
makes essential use of this freedom.

Using this method in each of the five remaining cases proves optimality
in Theorem~\ref{theorem:univopt}.  Carrying out the calculations
requires a computer, most notably because $M$ is a $120 \times 120$
matrix, but all the calculations are completely rigorous.  In
particular, we use exact rational arithmetic, with no floating-point
approximations. See Appendix~\ref{appendix:data} for more details.

To complete the proof, we must verify uniqueness.  The Hermite
interpolation at more than two points (counting with multiplicity) of
an absolutely monotonic polynomial cannot be linear unless the original
polynomial is itself linear: otherwise, by Rolle's theorem, the second
derivative of their difference would have a root between the points,
and hence the second derivative of the original function would vanish,
which would violate absolute monotonicity. Thus, to prove uniqueness
for nonlinear potential functions, we just need to prove it for the
five cases $f(t) = t^2$, $t^3$, $t^3(t-1/9)$, $t^3(t-1/9)^2$, and
$t^3(t-1/9)^2(t-1/3)$. Let $f(t)$ be one of these, and let $c$,
$F_0,\dots,F_5$ and $M$ be as above (for the modified function $f_0$).

Because $f \ge f_0$ pointwise,
$$
H(u,v,t) \le \frac{f\big(u^2\big)+f\big(v^2\big)+f\big(t^2\big)}{3},
$$
with equality only when $u^2,v^2,t^2 \in \{0,1/9,1/3\}$. Thus,
there are only finitely many points at which equality is
possible, and we can simply check all the cases.  For $(u,v,t)
\in D$, the triple $(u,v,t)$ can occur in the global minimum
for energy only if equality holds (otherwise, something is lost
in the inequalities in the proof of
Theorem~\ref{theorem:dualproj}). It follows that the only
triples in $D$ that can occur in an optimal configuration are
$(0,0,0)$, $(-1/3,-1/3,-1/3)$, $(-1/3,1/\sqrt{3},1/\sqrt{3})$,
$(1/\sqrt{3},1/\sqrt{3},0)$, and $(1/3,1/\sqrt{3},1/\sqrt{3})$,
up to equivalence defined by permutations of the coordinates
and pairs of sign changes.  Let $N_1,\dots,N_5$ denote the
numbers of times each equivalence class occurs between a triple
of distinct points, with the numbering from $1$ to $5$ in the
order given here.

The numbers $N_1,\dots,N_5$ must satisfy several linear constraints.
Their sum must be $210$, and for $0 \le k \le 5$, the inner product of
$F_k$ with
\begin{align*}
&35 \delta_{k,0} J + N_1 \widehat{T}^3_k\big(0,0,0\big)
+ N_2 \widehat{T}^3_k\left(-\frac{1}{3},-\frac{1}{3},-\frac{1}{3}\right)\\
& \phantom{} +
N_3 \widehat{T}^3_k\left(-\frac{1}{3},\frac{1}{\sqrt{3}},\frac{1}{\sqrt{3}}\right)
+ N_4 \widehat{T}^3_k\left(\frac{1}{\sqrt{3}},\frac{1}{\sqrt{3}},0\right)
+ N_5 \widehat{T}^3_k\left(\frac{1}{3},\frac{1}{\sqrt{3}},\frac{1}{\sqrt{3}}\right)
\end{align*}
must vanish, for the same complementary slackness reason as above.  If
we solve these simultaneous equations, we find that $N_1=6$, $N_2=24$,
$N_3=36$, $N_4=72$, and $N_5=72$.

Because $N_1=6$, there is a unique triple of orthogonal points, up to
permutations.  Call them $e_1,e_2,e_3$ (as usual, we deal with
arbitrary lifts to the unit sphere).  The only possible inner products
between points in the code are $0$, $\pm 1/\sqrt{3}$, $\pm 1/3$, and
$\pm 1$, so every point other than $e_1,e_2,e_3$ must have inner
product $0$, $\pm 1/\sqrt{3}$, or $\pm 1/3$ with each $e_i$ (note that
$-1$ is ruled out by the uniqueness of $e_1,e_2,e_3$). Because
$e_1,e_2,e_3$ are orthogonal unit vectors, the sum of the squares of
the inner products with them must be $1$.  That means they must all be
$\pm 1/\sqrt{3}$.  In other words, with respect to the orthonormal
basis $e_1,e_2,e_3$, the code consists of the basis vectors together
with four points in the cube $(\pm 1/\sqrt{3}, \pm 1/\sqrt{3}, \pm
1/\sqrt{3})$.  It follows that it is the rhombic dodecahedron code.

Finally, for uniqueness as a spherical code, we use $f(t) =
t^3(t-1/9)^2(t-1/3)$.  Every code whose minimal distance is as large as
the rhombic dodecahedron code's minimal distance has $\widehat{E}_f \le
0$, and hence it minimizes $\widehat{E}_f$ (because the minimal energy
is $0$). Thus, uniqueness for this potential function implies
uniqueness as a spherical code.  This completes the proof of
Theorem~\ref{theorem:univopt}.

\section{Open questions} \label{section:open}

\subsection{Three-point bounds and generalizations}

One pressing question is why three-point bounds are not sharp more
often.  Of course, it is unreasonable to expect that sharp bounds will
ever be common in packing or energy minimization problems.  The
phenomena seem to be intrinsically complicated (see, for example,
\cite{BBCGKS}), and the most one can hope for is to prove optimality in
exceptional cases. However, even just for spheres, the two-point bounds
are sharp for three infinite families and a dozen sporadic cases (see
Table~1 in \cite{CK}), and they are sharp even more often in projective
spaces. Thus, it is surprising that only three sharp cases are known
for three-point bounds, excluding the cases where two-point bounds
already suffice. Surely there must be more examples, but so far we have
not found them.

It is natural to ask what sort of bounds are required to prove
optimality for an $N$-point configuration.  Of course $N$-point bounds
suffice, but only because in the process of optimizing over $N$-point
distributions they implicitly optimize over all $N$-point
configurations.  For what sorts of families of configurations might
$o(N)$-point bounds suffice?

It would  be very interesting to do explicit calculations with
four-point bounds.  This project would involve exceptionally
time-consuming calculations, but it is possible in principle and
perhaps in practice.  Gijswijt, Mittelmann, and Schrijver \cite{GMS}
have carried out the analogue for binary error-correcting codes, which
suggests that the continuous version may also be tractable.

If we had further examples of sharp three-point bounds, they might
suggest organizing principles that could lead to a deeper
understanding.  For example, for two-point bounds in two-point
homogeneous spaces, Levenshtein \cite{L2} proved a beautiful criterion
for being an optimal code in terms of strength as a design: any
$m$-distance set that is a $(2m-1)$-design (or even an antipodal
$(2m-2)$-design) is an optimal code.  In these cases, the two-point
bounds are sharp and can be understood conceptually, with no need for
numerical calculations. The same criterion applies more generally to
prove universal optimality (Theorems~1.2 and~8.2 in \cite{CK}).  Every
known case in which two-point bounds are sharp fits into this
framework, with one exception, namely the regular $600$-cell, which is
analyzed in \cite{Andreev,CK}.

We cannot even propose a conjectural generalization of Levenshtein's
criterion to three-point bounds.  However, we hope that some general
principle will explain the sharp cases, offer guidance for how to
locate more of them, and lead to proofs that involve no explicit
numerical computations.

Another question is whether there are good applications of three-point
bounds to potential functions that depend on triples of points, rather
than pairs.  Of course, there is no obstacle to writing down such
bounds, but it is not clear that there are any important examples. Pair
potentials are far more common in both physics and mathematics, and we
do not know which higher-order generalizations may be worthy of
investigation.

\subsection{Projective spaces}

Theorem~\ref{theorem:classify} settles the question of universal
optimality in $\R\Ps^2$, but as we will see in
Subsection~\ref{subsec:spheres}, there may be cases in which
three-point bounds are sharp but universal optimality does not hold. We
are unaware of any cases in projective space besides the rhombic
dodecahedron code in which three-point bounds are sharp and two-point
bounds are not, but it is difficult to imagine that it is the only
example.

It is especially intriguing that the three-point bounds prove universal
optimality for this code, and it would be fascinating to find other
such cases. It is unlikely that there are any in $\R\Ps^3$.  In
addition to the universal optima that occur in every dimension (up to
$n$ orthogonal lines in $\R^n$, or $n+1$ connecting the vertices of a
regular simplex to its centroid), there are at least two other
universally optimal line configurations in $\R^4$: $12$ lines through
opposite vertices of a regular $24$-cell and $60$ from a regular
$600$-cell.  In each of these cases, universal optimality follows from
two-point bounds: the last one can be proved using the techniques from
Section~7 of \cite{CK}, and all the others follow from Theorem~8.2 in
\cite{CK}. We know of no other candidates for universal optimality in
$\R\Ps^3$.

Line configurations in $\R^5$ are more promising.  In addition to the
generic cases of up to six lines, two-point bounds prove universal
optimality for a ten-line configuration from \cite{CHS} (by Theorem~8.2
in \cite{CK}).  In the subspace of $\R^6$ consisting of all points with
coordinate sum zero, this configuration consists of the lines through
all the permutations of $(1,1,1,-1,-1,-1)$.  If we include also the
lines through the permutations of $(5,-1,-1,-1,-1,-1)$, we get a
$16$-line configuration also studied in \cite{CHS}.  The orthoplex
bound is sharp for it, so it is an optimal projective code, but nothing
more is known about energy minimization or uniqueness.

\begin{conjecture} \label{conjecture:16in5}
The $16$-line configuration in $\R^5$ described above is universally
optimal and is the unique optimal $16$-point code in $\R\Ps^4$.
\end{conjecture}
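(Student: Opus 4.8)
The plan is to follow the proof of Theorem~\ref{theorem:univopt} line by line, with the rhombic dodecahedron code replaced by the $16$-line code $\mathcal{C} \subset \R\Ps^4$ and with $n = 5$ throughout. First I would record the combinatorics of $\mathcal{C}$. Realizing the ambient space as the zero-sum hyperplane in $\R^6$, the six lines through permutations of $(5,-1,-1,-1,-1,-1)$ are exactly the lines through the vertices of a regular $5$-simplex, while the ten lines through permutations of $(1,1,1,-1,-1,-1)$ correspond to the $3$-$3$ bipartitions of those six vertices. A short computation then gives the squared inner products between distinct lines: $1/25$ between two simplex lines, $1/9$ between two bipartition lines, and $1/5$ between a simplex line and a bipartition line (so $0$ never occurs). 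The symmetry group contains $S_6$ acting as the standard $5$-dimensional irreducible representation, so $\sum_{x \in \mathcal{C}} xx^{T}$ is a scalar matrix and $\mathcal{C}$ is a projective $1$-design; hence the constant potential is trivial and every nonconstant affine potential is handled for free, exactly as in the proof of Theorem~\ref{theorem:univopt}.

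Next I would apply Corollary~\ref{corollary:reduce} with $T = \{0,0,0,\,1/25,1/25,\,1/9,1/9,\,1/5,1/5\}$, which reduces universal optimality to proving sharp energy bounds for the polynomial potentials $f_m(t) = \prod_{i=1}^m (t - t_i)$ that remain once the trivial and $1$-design cases are discarded (a short check decides whether $\mathcal{C}$ is also a projective $2$-design, which would handle $t^2$ as well). For each remaining $f$ --- possibly after the same modification trick of subtracting a small multiple of $t^{3}(t-1/25)^{2}(t-1/9)^{2}(t-1/5)^{2}$, which vanishes on $\mathcal{C}$ and hence does not change $\widehat{E}_f(\mathcal{C})$ --- I would invoke Theorem~\ref{theorem:dualproj} with $n = 5$: take $H(u,v,t) = c + \sum_{k \le K} \langle F_k, \widehat{T}^5_k(u,v,t) \rangle$, represent the slack $\frac{1}{3}\big(f(u^2)+f(v^2)+f(t^2)\big) - H(u,v,t)$ as a weighted sum of squares using the stably compact description of $D$ and Corollary~7.2.5 in \cite{Ma}, impose the complementary slackness equations coming from the finitely many orbits of triples of distinct lines of $\mathcal{C}$ (with their multiplicities) together with the conditions forcing $H$ and its first partial derivatives to agree with $\frac{1}{3}\big(f(u^2)+f(v^2)+f(t^2)\big)$ at those triples, solve the program numerically in SDPA-GMP or CSDP in a basis for the affine space cut out by these linear conditions, round to a few decimal places, and check the rounded certificate in exact rational arithmetic.

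Uniqueness would then follow as in the proof of Theorem~\ref{theorem:univopt}. Complementary slackness restricts the squared inner products of any optimal code to a finite set and hence its triples to finitely many orbits; solving the resulting linear system (the orbit multiplicities summing to $16 \cdot 15 \cdot 14 = 3360$, together with the matrix equations) pins down the triple distribution. The reconstruction step is the analogue of the ``unique orthogonal triple'' argument: since $1/25$ occurs only between simplex lines, the triple distribution forces the ``$1/25$-graph'' on the code to be a clique on six vertices, and six equiangular lines in $\R^5$ at inner product $\pm 1/5$ must form a regular $5$-simplex frame (the relevant $6 \times 6$ Seidel matrix is forced up to switching); coordinatizing with respect to this frame then forces the remaining ten lines to be the bipartition lines, so the code is $\mathcal{C}$ up to isometry. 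Finally, running the argument with $f(t) = t^{3}(t-1/25)^{2}(t-1/9)^{2}(t-1/5)$, which is $\le 0$ on $[0,1/5]$ and vanishes on $\mathcal{C}$, gives uniqueness as an optimal code, since every code meeting the orthoplex bound has $\widehat{E}_f \le 0$.

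The main obstacle is the first one, and it is conceptual rather than computational: there is no a priori reason the three-point semidefinite programming bound should be sharp for this configuration --- that is precisely why the statement is only a conjecture --- so the very first thing one must do is verify sharpness numerically for each of the finitely many $f_m$, and if it fails the whole approach collapses and one is forced up to four-point (or higher) bounds, whose calculations in $\R\Ps^4$ would be formidable. Granting sharpness, the remaining difficulty is purely one of scale: with $n = 5$ and $N = 16$ the matrices $F_k$ and the sum-of-squares Gram matrix are larger and of somewhat higher degree than in the $n = 3$, $N = 7$ case, so the rigorous rounding --- which succeeds only if the numerically observed zero eigenvalues are exactly those forced by complementary slackness --- is correspondingly more delicate, though the method is unchanged.
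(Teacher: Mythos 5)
There is a genuine gap, and it is exactly the one you flag at the end as ``the main obstacle'': the statement is a conjecture in the paper precisely because the approach you propose does not go through. The authors report explicitly that ``the three-point bounds for potential energy do not seem to settle Conjecture~\ref{conjecture:16in5}, although they suffice to prove Theorem~\ref{theorem:univopt},'' and they suggest that four-point bounds might be required (while warning that those computations could be formidable). In other words, the first and essential step of your plan --- numerically verifying that the three-point semidefinite program is sharp for each of the finitely many potentials $f_m$ produced by Corollary~\ref{corollary:reduce} --- is reported by the authors to fail for the $16$-line configuration. Everything downstream (the complementary-slackness equations, the exact rounding, the reconstruction of the code from its triple distribution, and the uniqueness-as-a-code argument) is contingent on that sharpness, so the proposal does not constitute a proof; it is a conditional plan whose condition the available evidence contradicts. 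Your honest acknowledgment of this is to your credit, but a proof attempt that hinges on an unverified (and apparently false) numerical premise cannot be accepted as a proof of the statement.

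That said, the surrounding details you supply are sound and consistent with the paper's setup: the squared inner products of the configuration are indeed $1/25$, $1/9$, and $1/5$; the code is a projective $1$-design by the $S_6$-symmetry argument; and the reduction via Corollary~\ref{corollary:reduce}, the modification trick of subtracting a small nonnegative polynomial vanishing on the code, and the reconstruction strategy via the ``$1/25$-graph'' are all faithful analogues of the rhombic dodecahedron proof. If one were to attempt the conjecture seriously, the correct starting point is the one the authors indicate: either find some additional structure that rescues the three-point bound, or move to four-point bounds, for which the optimization is over functions on a six-dimensional orbit space and is at the edge of current computational feasibility.
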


The parallels between this code and the rhombic dodecahedron code are
noteworthy.  The seven lines in $\R^3$ can be constructed completely
analogously, using the permutations of the points $(1,1,-1,-1)$ and
$(3,-1,-1,-1)$.  Equivalently, both codes can be constructed by
starting with the lines through the vertices of a regular simplex
centered at the origin, and then filling in all the holes (i.e., the
lines at maximal distance from the original lines).

Both codes can be proved optimal using the orthoplex bound.
Nevertheless, the three-point bounds for potential energy do not seem
to settle Conjecture~\ref{conjecture:16in5}, although they suffice to
prove Theorem~\ref{theorem:univopt}.  Perhaps four-point bounds can
prove Conjecture~\ref{conjecture:16in5}, but the calculations required
to investigate this could be formidable.  The pattern certainly does
not continue to $\R^7$: the analogous configuration has
$\binom{8}{4}/2+8=43$ lines, but comparison with Table~4 from
\cite{CHS} shows that it is not even an optimal projective code.

\subsection{Spheres} \label{subsec:spheres}

Although in this paper we focus on real projective space, we have also
applied three-point bounds to potential energy minimization on spheres.
We have not found any sharp cases beyond the two identified by Bachoc
and Vallentin \cite{BV3}, namely the Petersen code in $\R^4$ (the ten
edge midpoints of a regular simplex) and the square antiprism in
$\R^3$. Using three-point bounds, they proved rigorously that the
Petersen code is an optimal code, and based on their calculations they
conjectured that the bounds are also sharp for the antiprism.

For ten points in $S^3$, we observe a remarkable phenomenon in the
three-point bounds for energy.  The Petersen code is not universally
optimal, because it sometimes has greater potential energy than the
code consisting of two regular pentagons on orthogonal planes in
$\R^4$.  However, the three-point bounds appear to be sharp in every
case:

\begin{conjecture} \label{conjecture:petersen}
For every completely monotonic potential function $f$ on $(0,4]$,
either the Petersen code or the orthogonal pentagon code minimizes the
energy $E_f$ among all ten-point codes in $S^3$.  Furthermore, the
three-point bounds are always sharp (for whichever code is optimal).
\end{conjecture}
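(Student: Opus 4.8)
I would recast the conjecture as the single assertion that the three-point bound $B(f)$ for ten points in $S^3$ equals $\min\bigl(E_f(P),E_f(Q)\bigr)$ for every completely monotonic $f$, where $P$ is the Petersen code and $Q$ the orthogonal pentagon code. Since $P$ and $Q$ are themselves ten-point codes, Theorem~\ref{theorem:primal} already gives $B(f)\le E_f(P)$ and $B(f)\le E_f(Q)$, so the entire content is the reverse inequality
$$
B(f)\ \ge\ \min\bigl(E_f(P),E_f(Q)\bigr),
$$
which, by Theorem~\ref{theorem:dual}, amounts to producing for each $f$ a dual-feasible auxiliary function whose bound is at least the smaller of the two energies. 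The organizing idea is to split the problem along the phase boundary. Let $\ell(f)=E_f(P)-E_f(Q)$, a linear functional on the cone $\mathcal{M}$ of completely monotonic potential functions, and let $\mathcal{M}^{\ge}$ and $\mathcal{M}^{\le}$ be the sub-cones on which $\ell\ge0$ and $\ell\le0$; these cover $\mathcal{M}$, and on $\mathcal{M}^{\ge}$ the function $f\mapsto\min\bigl(E_f(P),E_f(Q)\bigr)=E_f(Q)$ is \emph{linear} (symmetrically on $\mathcal{M}^{\le}$).

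The key structural point is that $B$ is positively homogeneous and concave in $f$ (it is a minimum, over feasible triple distributions, of functionals linear in $f$), hence superadditive, and it is monotone in $f$. Therefore, to prove $B(f)\ge E_f(Q)$ for all $f\in\mathcal{M}^{\ge}$ it suffices to prove $B(e)=E_e(Q)$ at the extreme rays $e$ of $\mathcal{M}^{\ge}$, and symmetrically on $\mathcal{M}^{\le}$. After the substitution turning complete monotonicity in the squared chordal distance into absolute monotonicity in a rescaled variable, the extreme rays of $\mathcal{M}$ are the monomials, equivalently the functions $\langle x,y\rangle\mapsto(1+\langle x,y\rangle)^i$ (Theorem~9b in \cite[p.~154]{W}). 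A short Descartes-type count for exponential sums shows that $\ell$ annihilates degrees $0$, $1$, and $2$ (both codes are spherical $2$-designs), is strictly positive in degrees $3$ through $6$, and is strictly negative in all degrees $\ge7$, so the crossover occurs exactly once. Consequently the extreme rays of $\mathcal{M}^{\ge}$ are the four monomials of degrees $3$–$6$ together with the extreme rays of the boundary cone $\mathcal{M}^{0}=\{f:\ell(f)=0\}$, while those of $\mathcal{M}^{\le}$ are the monomials of degree $\ge7$ together with the extreme rays of $\mathcal{M}^{0}$; a decomposition argument (using that the sign change of $\ell$ is unique) reduces an arbitrary $f$ in either half-cone to these.

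For each of the finitely many monomials of degrees $3$–$6$ I would build an exact rational certificate with complementary slackness at the triple distribution of $Q$, by the solve-numerically-then-round procedure of Section~4, and likewise handle whichever few monomials on the $P$ side are needed directly. The boundary cone $\mathcal{M}^0$ is the genuinely new case: a dual certificate there must be tangent to the averaged potential $\bigl(f(u^2)+f(v^2)+f(t^2)\bigr)/3$ at the triples of inner products occurring in $P$ \emph{and} in $Q$ simultaneously, so it satisfies far more complementary-slackness constraints than any single-code certificate in Section~4. One would first identify the extreme rays of $\mathcal{M}^0$ explicitly — in moment-theoretic terms they are supported on the zeros of $\ell$, so there should be only a small, describable family — and then realize a sufficiently structured exact solution for each. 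The remaining loose end is the infinite tail of monomials of degree $\ge7$ on the $P$ side; here I would try either a one-parameter family of auxiliary functions indexed by the degree, or a reduction of high degrees to the maximal-distance problem, for which the certificate of Bachoc and Vallentin \cite{BV3} already establishes $B=E_\cdot(P)$ in the coding limit.

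I expect the main obstacle to be exactly the boundary cone $\mathcal{M}^0$: certifying the three-point bound there demands a dual solution sharp at two inequivalent ground states at once, and constructing such a solution and then rounding it rigorously — with the larger set of forced zero eigenvalues this entails, any one of which turning negative would break the argument — goes beyond what the rounding technique of Section~4 has so far achieved. If that doubly-sharp certificate can be produced, the superadditivity reduction, together with the routine (if laborious) monomial cases and a uniform treatment of the high-degree tail, would finish the proof, and the same scheme should attack the other conjectured phase-transition cases as well.
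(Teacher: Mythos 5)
This statement is Conjecture~\ref{conjecture:petersen}, which the paper explicitly leaves open: the authors say they ``have not been able to complete a proof,'' so there is no proof in the paper to compare yours against. What you have written is a strategy, and it is essentially the authors' own partial-progress outline from Subsection~\ref{subsec:spheres}, recast in the language of superadditivity of the concave functional $B$. Your reduction to extreme rays is sound in outline (and the superadditivity framing is a clean way to make precise why it ``suffices'' to treat the cases the paper lists), and the pieces you can discharge are exactly the pieces the paper can discharge: degrees $0$--$2$ by two-point bounds, degrees $3$--$6$ by rounded certificates sharp at the orthogonal pentagons, and degrees $\ge 7$ for the Petersen code via the spherical analogue of Corollary~\ref{corollary:reduce} --- which is the concrete mechanism you should invoke for the ``infinite tail,'' rather than an unspecified one-parameter family of auxiliary functions or a coding-limit argument.

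The genuine gap is the one you yourself flag: the boundary cone $\mathcal{M}^0$. Its extreme rays are precisely the functions $f(r) = (4-r)^j + \alpha_{j,k}(4-r)^k$ with $3 \le j \le 6$ and $k \ge 7$, which are exactly the cases the paper says it has found much more difficult to deal with. A dual certificate for such an $f$ must satisfy complementary slackness simultaneously at the triple distributions of two inequivalent codes, and neither you nor the paper gives a construction or a rigorous rounding procedure for such a doubly sharp certificate; since every completely monotonic $f$ near the phase transition funnels through $\mathcal{M}^0$ under your decomposition, the argument cannot close without it. Two smaller loose ends: the asserted sign pattern of $\ell$ on the monomials (zero in degrees $\le 2$, positive in $3$--$6$, negative in all degrees $\ge 7$, with a single crossover) is supported by computation but not proved by the ``Descartes-type count'' you gesture at; and passing from finite superadditivity of $B$ to the countable (or integral) decomposition of a general completely monotonic function requires a continuity or lower-semicontinuity argument for $B$ that you have not supplied. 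So the proposal is a reasonable research program --- the same one the authors propose --- but not a proof.
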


In other words, the three-point bounds remain sharp throughout the
phase transition between the two ground states.  If true, this
represents an unprecedented phenomenon in coding and energy
minimization.

We believe that Conjecture~\ref{conjecture:petersen} should be
provable, although we have not been able to complete a proof.  The cone
of completely monotonic functions on $(0,4]$ is spanned by the
functions $f(r) = (4-r)^k$ for $k=0,1,2,\dots$ (see Theorem~9b in
\cite[p.~154]{W}). For $k \le 2$, both codes have the same energy and
the two-point bounds are sharp.  For $3 \le k \le 6$, the three-point
bounds are sharp for the orthogonal pentagons, and for $k \ge 7$ we can
prove that they are sharp for the Petersen code (by using the spherical
analogue of Corollary~\ref{corollary:reduce} to reduce to a finite
basis). To prove Conjecture~\ref{conjecture:petersen}, it would suffice
to prove it just for the functions $f(r) = (4-r)^j + \alpha_{j,k}
(4-r)^k$ with $3 \le j \le 6$ and $k \ge 7$, where $\alpha_{j,k}>0$ is
chosen to make the two codes have equal energy.  Unfortunately, we have
found it much more difficult to deal with these cases.

The same phenomenon as in Conjecture~\ref{conjecture:petersen} seems to
occur also for antiprisms. In that case, there is a one-parameter
family of configurations.  Each consists of two squares in parallel
planes and offset by a $45^\circ$ angle, but the distance between the
planes can vary.

\begin{conjecture} \label{conjecture:antiprism}
For every completely monotonic potential function $f$ on $(0,4]$, some
square antiprism minimizes the energy $E_f$ among all eight-point codes
in $S^2$. Furthermore, the three-point bounds are always sharp (for
whichever code is optimal).
\end{conjecture}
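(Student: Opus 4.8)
\medskip

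The plan is to follow the strategy behind Theorem~\ref{theorem:univopt}, the essential new difficulty being that the minimizer is a one-parameter family rather than a single rigid code. At the low end, the square antiprism whose two squares sit at heights $\pm 1/\sqrt 3$ is a spherical $3$-design, so by the design argument it minimizes $E_f$ among all eight-point codes for every completely monotonic polynomial of degree at most $3$; combined with the spanning set $f(r) = (4-r)^k$ furnished by Theorem~9b of \cite[p.~154]{W}, this disposes of $k \le 3$. At the high end, I would use the spherical analogue of Corollary~\ref{corollary:reduce}, Hermite-interpolating at the four squared distances of the Sch\"utte--van der Waerden antiprism (the unique square antiprism of maximal minimal distance, treated in \cite{SvdW}); this reduces the tail to optimality for finitely many polynomial potentials, each conjecturally minimized by that one rigid antiprism, to which the rhombic-dodecahedron methodology (numerical semidefinite programming, complementary slackness, rounding a non-unique exact dual certificate) should apply over the number field generated by its inner products. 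The heart of the matter is the intermediate régime, where the optimal antiprism $\mathcal{A}_h$ varies continuously with the potential function.

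For that régime I would parametrize everything by the height $h$: write $u_1(h),\dots,u_4(h)$ for the four inner products realized in $\mathcal{A}_h$ (algebraic functions of $h$), list the handful of triples of inner products occurring among distinct points of $\mathcal{A}_h$, and for the dual program of Theorem~\ref{theorem:dual} seek an auxiliary function
$$
H_h(u,v,t) = c(h) + \sum_{k \ge 0} \langle F_k(h), T^3_k(u,v,t) \rangle,
$$
together with a sum-of-squares certificate for $\tfrac{1}{3}\big(f(u)+f(v)+f(t)\big) - H_h(u,v,t)$ on $D$, subject to $F_k(h) \succeq 0$, to positive semidefiniteness of the sum-of-squares matrix, and to the complementary-slackness conditions (matching of $H_h$ and its first partial derivatives with $\tfrac{1}{3}\big(f(u)+f(v)+f(t)\big)$ at the triples of $\mathcal{A}_h$, and orthogonality of $F_k(h)$ to the corresponding weighted sum of the $T^3_k$) that force the resulting bound to equal $E_f(\mathcal{A}_h)$. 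One must then verify two things: that this family of certificates is genuinely valid for every $h$ in the relevant interval, and that as $h$ sweeps that interval the convex cones of potentials it certifies cover the entire cone of completely monotonic functions, so that every $f$ is handled and not merely the monomials $(4-r)^k$.

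The main obstacle is the parameter dependence. Unlike the Petersen case, where there are only two ground states and the whole cone is covered by the two chambers plus finitely many boundary (two-term) potentials, here the ground states form a curve, the sub-cone of potentials minimized by any fixed $\mathcal{A}_h$ is thin, and no finite list of potentials can suffice: one genuinely needs a continuum of distinct dual certificates. Since the equality locus of $\tfrac{1}{3}\big(f(u)+f(v)+f(t)\big) - H_h(u,v,t)$ moves with $h$ and is cut out by algebraic numbers whose minimal polynomials themselves vary with $h$, the rounding trick that produced the exact certificate for the rhombic dodecahedron has no analogue; what is required is a closed-form family $c(h)$, $F_k(h)$, and sum-of-squares matrix $M(h)$, valid on an entire interval, with semidefiniteness proved uniformly in the parameter — a semidefinite feasibility problem depending on a real parameter, which is far beyond present numerical techniques. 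A natural line of attack is to solve the problem exactly at the two rigid endpoints (the $3$-design antiprism and the Sch\"utte--van der Waerden antiprism), guess an algebraic family interpolating between them, and control the eigenvalues of $F_k(h)$ and $M(h)$ across the interval by perturbation; the crux, and the reason Conjecture~\ref{conjecture:antiprism} remains open, is ruling out that an eigenvalue forced to vanish by complementary slackness at one value of $h$ fails to stay nonnegative nearby.
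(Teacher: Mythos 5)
This statement is Conjecture~\ref{conjecture:antiprism}, which the paper explicitly leaves open, so there is no proof of the authors' to compare against --- and your text is not a proof either: it is a research plan whose own final paragraph concedes that the decisive step is out of reach. The genuine gap is the entire intermediate regime. From ``for that regime I would seek an auxiliary function $H_h$'' onward you describe the conditions a certificate would have to satisfy (complementary slackness at the triples of $\mathcal{A}_h$, matching of first derivatives, positive semidefiniteness of $F_k(h)$ and of the sum-of-squares matrix uniformly in $h$) without producing such a family or proving that one exists; and even granting certificates along the whole curve of antiprisms, you correctly note but do not address the further requirement that the union over $h$ of the certified sub-cones exhaust the cone of completely monotonic potentials. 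Your diagnosis does, however, match the authors' own assessment: they state that the continuous family of optima makes this conjecture likely harder than Conjecture~\ref{conjecture:petersen} and that it ``seems to require a new idea beyond what suffices for Theorem~\ref{theorem:univopt}.'' Your observation that no finite list of test potentials can suffice --- because the spherical analogue of Corollary~\ref{corollary:reduce} requires building the interpolation multiset from the inner products of a \emph{fixed} optimizer, whereas here the optimizer's inner products move with $f$ --- is precisely why the rounding strategy of the rhombic dodecahedron proof has no analogue.

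Two smaller remarks on the endpoints. The height-$1/\sqrt{3}$ antiprism is indeed a spherical $3$-design, so the design argument disposes of completely monotonic polynomials of degree at most $3$ via two-point bounds (and a fortiori the three-point bounds are sharp there); and Hermite interpolation at the inner products of the Sch\"utte--van der Waerden antiprism \cite{SvdW} is the same device the authors use for the Petersen code in the steep regime. But each polynomial case produced by that reduction still requires its own exact dual certificate, none of which is exhibited, and the claim that the rigid antiprism is the minimizer for each of them is itself conjectural. In short: the plan is essentially the program the paper gestures at, the central obstruction is correctly identified, but nothing is proved.
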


Because this conjecture involves a continuous family of optima, it may
be more difficult to prove than Conjecture~\ref{conjecture:petersen}.
It seems to require a new idea beyond what suffices for
Theorem~\ref{theorem:univopt} and our partial progress on
Conjecture~\ref{conjecture:petersen}.  The dependence of the optimal
energy on the potential function is also extraordinarily complicated,
because of the need to optimize over all square antiprisms. For
example, for the Coulomb potential, the minimal energy within the
family of antiprisms is a root of an irreducible polynomial of degree
$48$.


These conjectures suggest a broad generalization of the mechanism
behind the known proofs of universal optimality. Given a space $X$ and
a number of points $N$, call a potential function \emph{$k$-point
sharp} if there exists an $N$-point configuration in $X$ whose energy
under that potential function matches the $k$-point bounds. Subject to
one technical restriction, we conjecture that all completely monotonic
functions are $k$-point sharp if any one of them is:

\begin{conjecture} \label{conjecture:univ}
Suppose $X$ is a sphere or projective space and $N \in \N$. If there
exists a completely monotonic potential function that is $k$-point
sharp for $N$ points in $X$ and is not a polynomial, then every
completely monotonic potential function is $k$-point sharp for $N$
points in $X$. The same conclusion also holds if the $k$-point bounds
prove that an $N$-point configuration is an optimal code in $X$.
\end{conjecture}

For low-degree polynomials, there are numerous counterexamples: for
example, for each spherical $k$-design, two-point bounds prove that it
is optimal for $f(r) = (4-r)^k$.  However, that is the only loophole we
have found. Conjecture~\ref{conjecture:univ} is very strong, and
perhaps we have been misled by the few known examples with $k > 2$. It
may hold in these examples merely because the optimal structures are in
some sense of comparable complexity.  However,
Conjecture~\ref{conjecture:univ} seems to be the simplest explanation
of the available evidence.

The most dramatic test case will be $24$ points in $S^3$.  The vertices
of a regular $24$-cell are almost certainly the unique optimal code,
but they are not universally optimal \cite{CCEK}.  Instead, there is a
one-parameter family of competing configurations that improve on the
$24$-cell for some potential functions.  Numerical evidence suggests
that either the $24$-cell or one of these competitors is always
optimal. If that is true, and Conjecture~\ref{conjecture:univ} is true
as well, then $k$-point bounds cannot settle the optimality of the
$24$-cell without also dealing with its more exotic competitors.  If
they can accomplish both with a small value of $k$, it will be truly
remarkable.

\appendix
\section{The orthoplex bound}
\label{appendix:orthoplex}

In this appendix, we explain the orthoplex bound from
\cite{CHS} in somewhat different terms than those used there
(although it is essentially the same approach), and we discuss
some examples and generalizations.

The orthoplex bound applies to codes in any Grassmannian, but for real
projective space it is particularly simple.  It is based on the map
from $\R^n$ to its symmetric square $\Sym^2 \R^n$ that takes $x$ to $x
\otimes x$.  Under the natural inner product on $\Sym^2 \R^n$, $\langle
x \otimes x, y \otimes y\rangle = \langle x,y \rangle^2$.  Let
$e_1,\dots,e_n$ be an orthonormal basis of $\R^n$, and let $E =
\sum_{i=1}^n e_i \otimes e_i$.  Then for all $x \in \R^n$, $|x|^2 =
\langle x \otimes x, E \rangle$.  Thus, the unit sphere $S^{n-1}$ is
mapped into the cross section of the unit sphere in $\Sym^2 \R^n$ that
has inner product $1$ with the vector $E$ (note that $|E|^2 = n$).

For $x \in S^{n-1}$, define $\varphi(x) = (x \otimes x -
E/n)/\sqrt{1-1/n}$.  (In other words, start with $x \otimes x$, project
orthogonally to $E$, and rescale to get a unit vector.) Then
$|\varphi(x)|^2 = 1$ and $\langle \varphi(x), \varphi(y) \rangle =
(\langle x,y \rangle^2-1/n)/(1-1/n)$, which implies that
$\varphi(x)=\varphi(y)$ if and only if $x = \pm y$.  The image of
$\varphi$ is orthogonal to $E$ and is thus contained in the unit sphere
of a Euclidean space of $n(n+1)/2-1$ dimensions.

Under $\varphi$, an $(n,N,t)$ antipodal code is mapped to an
$$\big(n(n+1)/2-1, N/2, (t^2-1/n)/(1-1/n)\big)$$ spherical code.  Thus, any
bound on such spherical codes yields a bound on the original antipodal
code. Linear programming bounds are useless in this context, because
any positive-definite function on the higher-dimensional sphere pulls
back under $\varphi$ to a positive-definite function on $S^{n-1}$ (so
there is no need to apply $\varphi$ in the first place). However, other
bounds may prove fruitful. For example, for $m+1 < k \le 2m$, no
$k$-point code in $S^{m-1}$ can achieve an angle of better than $\pi/2$
(see \cite{R} or Theorem~6.2.1 in \cite{BJr}). It follows that when
$n(n+1)/2 < N/2 \le n(n+1)-2$, no $N$-point antipodal code in $S^{n-1}$
can have maximal inner product less than $1/\sqrt{n}$, since otherwise
its image under $\varphi$ would have maximal inner product less than
$0$.  This is the orthoplex bound for projective codes.

It is not obvious that the orthoplex bound is ever sharp, but in fact
it is sharp in several important cases, for example when
$$
(N/2,n) \in
\{(7,3),(11,4),(12,4),(16,5),(22,6),(46,9),(47,9),(48,9)\}.
$$
These cases are all taken from \cite{CHS}.
The $(7,3)$ case is of
course   the rhombic dodecahedron code.
Another noteworthy case is $N/2 = n(n+2)/2$ with $n$ a power of $4$
(which generalizes the $(12,4)$ case), due to Levenshtein \cite{L1}. In
this case, the orthoplex bound shows that optimality still holds when
up to $n/2 - 1$ lines are omitted.

The orthoplex bound does not imply uniqueness. One problem is that when
$N/2 < n(n+1)-2$, there are continuous families of spherical codes with
maximal inner product $0$.  To prove uniqueness, one would have to
determine which such codes are in the image of $\varphi$, which seems
to be a subtle question.  Even when $N/2 = n(n+1)-2$, the image of
$\varphi$ is uniquely determined up to isometries of the sphere
containing it, but it does not obviously follow that the original code
is also unique.

Uniqueness is not merely difficult to prove, but in fact sometimes
false.  For example, the unique $(4,24,1/2)$ antipodal code is the
regular $24$-cell (uniqueness follows easily from the two-point
bounds). As one can see in the list of examples above, the orthoplex
bound proves not only that it is optimal, but also that removing one
pair of antipodal points still yields an optimal antipodal code of size
$22$. However, that is not the only $(4,22,1/2)$ antipodal code.
Another one is given by the vertices $(\pm 1/\sqrt{3}, \pm 1/\sqrt{3},
\pm 1/\sqrt{3},0)$ of a cube, a pair $(0,0,0,\pm 1)$ of points
antipodal to all of them, and the two octahedra
$$
(\pm \sqrt{3}/2, 0, 0, \pm 1/2), \quad
(0, \pm \sqrt{3}/2, 0, \pm 1/2), \quad (0, 0, \pm \sqrt{3}/2, \pm 1/2)
$$
in the $3$-dimensional affine subspaces with fourth coordinate $1/2$ or
$-1/2$. This example, which appears to be new, is a $(4,22,1/2)$
antipodal code that is not contained in the $24$-cell, because inner
products of $\pm 1/3$ and $\pm 1/4$ occur.  We do not know of any other
$(4,22,1/2)$ antipodal codes, but we have not searched thoroughly
enough to be confident that no others exist.

Note that the second Gegenbauer polynomial $P^n_2$ for $S^{n-1}$ is
given by $P^n_2(t) = (t^2-1/n)/(1-1/n)$.  Thus, the map $\varphi$
satisfies $\langle \varphi(x), \varphi(y) \rangle = P^n_2(\langle x,y
\rangle)$. This is not a coincidence, and in fact the orthoplex bound
can be generalized to substitute any positive-definite function $f$ for
$P^n_2$ if it is normalized to satisfy $f(1)=1$.  For individual
Gegenbauer polynomials this follows immediately from the reproducing
kernel property (see Subsection~2.2 of \cite{CK} for a review of the
theory), and nonnegative linear combinations can be obtained by taking
direct sums of the corresponding spaces. Unfortunately, we have been
unable to find any cases other than $f=P^n_2$ that lead to sharp bounds
on antipodal or general spherical codes.

\section{Universal optima in $\mathbb{R}\mathbb{P}^2$}
\label{appendix:rp2}

The proof of completeness for the list in
Theorem~\ref{theorem:classify} is based on a theorem of Leech
\cite{Lee}.  He classified balanced configurations in $S^2$, i.e.,
point configurations that are in equilibrium for every pairwise force
law depending only on distance.  Equivalently, in the space of
configurations they are critical points \pagebreak for energy for every potential
function.  This is a necessary condition for universal optimality in
$S^2$, and for universal optimality in $\R\Ps^2$ it is necessary that
the corresponding antipodal configuration in $S^2$ be balanced.  (If it
is not balanced, then the net forces inherit the antipodal symmetry and
descend to projective space.)

One obvious balanced configuration is a ring of equally spaced points
around the equator, and it remains balanced if one includes the north
and south poles as well.  In addition, each regular polyhedron gives
several balanced configurations.  Its vertices, face centers, and edge
midpoints each form a balanced configuration, and every combination of
them (vertices and faces, vertices and edges, faces and edges, or all
three) is also balanced when they are rescaled to lie on the same
sphere. Leech proved that this list of balanced configurations in $S^2$
is complete. By contrast, the high-dimensional case is not so simple,
and there exist balanced configurations with no symmetry at all
\cite{CEKS}.

To classify the universal optima in $\R\Ps^2$, one need only examine
the antipodal balanced configurations in $S^2$.  The cases with a ring
around the equator are easily dealt with: whenever the ring has more
than four points in it and the potential function is sufficiently
steep, one can lower the energy by moving one point slightly off the
equator (and of course moving the antipodal point correspondingly). The
only difficulty is dealing with the finitely many cases that come from
regular polyhedra.

\begin{table}
\caption{Balanced configurations that must be eliminated.}
\label{table:polyhedra}
\begin{center}
\begin{tabular}{ccccc}
Polyhedron& Combination & $N$ & Cosine of minimal angle & Record\\
\hline
Octahedron & E & $6$ & $1/2$ & $0.447\dots$\\
Octahedron & VE & $9$ & $1/\sqrt{2} = 0.707\dots$ & $0.669\dots$\\
Octahedron & EF & $10$ & $\sqrt{2/3} = 0.816\dots$ & $0.686\dots$\\
Octahedron & VEF & $13$ & $\sqrt{2/3} = 0.816\dots$ & $0.768\dots$\\
Icosahedron & E & $15$ & $(1+\sqrt{5})/4 = 0.809\dots$ & $0.786\dots$\\
Icosahedron & F & $10$ & $\sqrt{5}/3 = 0.745\dots$ & $0.686\dots$\\
Icosahedron & VE & $21$ & $\sqrt{(5+\sqrt{5})/10} = 0.850\dots$ & $0.846\dots$\\
Icosahedron & VF & $16$ & $\sqrt{(5+2\sqrt{5})/15} = 0.794\dots$ & same\\
Icosahedron & EF & $25$ & $\sqrt{(3+\sqrt{5})/6} = 0.934\dots$ & $0.872\dots$\\
Icosahedron & VEF & $31$ & $\sqrt{(3+\sqrt{5})/6} = 0.934\dots$ & $0.894\dots$\\
\\ 
\end{tabular}
\end{center}
\end{table}

Table~\ref{table:polyhedra} shows a list of the cases that must be
eliminated.  It omits the actual universal optima from
Theorem~\ref{theorem:classify} as well as any duplicate cases (for
example, from duality).  The first four columns specify the polyhedron,
which combination of vertices, edges, and faces is being used, the
number $N$ of lines, and the cosine of the minimal angle between them.
The final column tells the smallest such cosine known for a
configuration of this size, from Table~1 in \cite{CHS}.  In every case
except $N=16$, the record from \cite{CHS} is better than the balanced
configuration, which is therefore not universally optimal since it is
not even an optimal projective code.  However, the balanced
configuration with $N=16$ is probably an optimal code.

Nevertheless, the $N=16$ case is not universally optimal.  It is a
$2$-design in $\R\Ps^2$ but not a $3$-design.  However, a $16$-point
$3$-design exists: it is equivalent to an antipodal $32$-point
spherical $7$-design in $S^2$, and such a design is constructed in
Section~4 of \cite{HS}.  It follows that the $16$-line configuration is
not universally optimal, and hence the list in
Theorem~\ref{theorem:classify} is complete.

\section{Universal optimality of the regular $600$-cell}
\label{appendix:600cell}

\begin{theorem}[Cohn and Kumar \cite{CK}]
The $120$ vertices of the regular $600$-cell are universally optimal in
$S^3$.
\end{theorem}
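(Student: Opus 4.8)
The plan is to run, for the sphere $S^3$, the same reduction used above for the rhombic dodecahedron code, with the interpolation machinery of this paper replacing the ``conductivity'' argument of \cite{CK}. Label the $120$ vertices by the unit quaternions forming the binary icosahedral group. The inner products between distinct vertices are then $-1$, $0$, $\pm 1/2$, $\pm(\varphi-1)/2$, and $\pm\varphi/2$, where $\varphi=(1+\sqrt5)/2$ (note $(\varphi-1)/2 = 1/(2\varphi)$). By the spherical analogue of Corollary~\ref{corollary:reduce}, applied on the interval $[-1,1)$ on which the potential functions coming from completely monotonic functions of squared distance are absolutely monotonic, it suffices to fix a multisubset $T \subseteq [-1,1)$ containing all of these inner products, with every point other than the endpoint $-1$ having even multiplicity, and to prove that the $600$-cell minimizes $\widetilde{E}_{f_m}$ over all $120$-point codes in $S^3$ for the polynomials $f_m(t) = \prod_{i=1}^m (t-t_i)$ with $0 \le m < M := |T|$ and a convenient ordering $t_1, \dots, t_M$ of $T$. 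The economical choice is to take $T$ to consist of $-1$ together with each of the other seven inner products doubled, so that $M = 15$ and one must handle $f_0, \dots, f_{14}$.

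Each $f_m$ is a polynomial, so I would bound $\widetilde{E}_{f_m}$ using the two-point (linear programming) bound together with Schoenberg's theorem. Concretely, if a polynomial $h$ satisfies $h \le f_m$ on $[-1,1)$, agrees with $f_m$ at every inner product occurring in the $600$-cell, and has a Gegenbauer expansion $h = \sum_k \widehat{h}_k P^4_k$ with $\widehat{h}_k \ge 0$ for all $k \ge 1$, then $\widetilde{E}_{f_m}$ is at least the bound this produces, and the $600$-cell meets it with equality provided $\widehat{h}_k = 0$ whenever $k \ge 1$ and $\sum_{x,y} P^4_k(\langle x,y\rangle) \ne 0$ over the $600$-cell. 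The $600$-cell is a spherical $11$-design, and the Molien series of the binary icosahedral group shows moreover that $\sum_{x,y} P^4_k$ vanishes over it for every $k \ge 1$ except $k \in \{12,20,24,\dots\}$; since the certificates I have in mind have degree only slightly above $14$, the one genuinely constrained degree is $k = 12$. For the low-order cases I would order $T$ so that its negative entries and the two copies of $0$ come first, so that $f_0, \dots, f_9$ are products of factors $t - t_i$ with $t_i \le 0$, hence have nonnegative coefficients and therefore nonnegative Gegenbauer coefficients on $S^3$; then $h = f_m$ is itself a valid certificate for $m \le 9$, all of whose positive Gegenbauer coefficients sit in degrees below $12$. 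A short check, or a small correction of the kind used above for the rhombic dodecahedron code, disposes of $m = 10, 11$, whose degrees are still below $12$. For the three remaining cases $f_{12}, f_{13}, f_{14}$ I would use a genuine strict under-interpolant: a polynomial $h_m$ of degree $15$ or so that interpolates $f_m$ to second order at the seven interior inner products and to first or second order at $-1$, with the surviving free parameter used to force the degree-$12$ Gegenbauer coefficient of $h_m$ to vanish, after which one checks that $h_m \le f_m$ on $[-1,1)$ and that the remaining Gegenbauer coefficients are nonnegative. As with the rhombic dodecahedron code, it is cleanest to first replace $f_m$ by $f_m$ minus a small nonnegative polynomial vanishing to second order at every occurring inner product, so that the bound to be proved is slightly weakened yet still sharp for the $600$-cell.

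The main obstacle is precisely the degree-$12$ coefficient. It is the single place where the design strength of the $600$-cell falls short of the degree of the Hermite interpolant at $T$, and it is exactly what obliged Cohn and Kumar to bring in conductivity in \cite{CK}; producing $h_{12}, h_{13}, h_{14}$ explicitly --- with vanishing degree-$12$ Gegenbauer coefficient, with all other Gegenbauer coefficients nonnegative, and with $h_m \le f_m$ throughout $[-1,1)$ --- is where the real computation lives, and it must be carried out with exact arithmetic to be rigorous. The remaining points to verify are routine but necessary: that the chosen ordering of $T$ really makes $f_0, \dots, f_{11}$ (or their minor corrections) into valid certificates, and that the interpolation and sign conditions at the endpoint $-1$ are consistent with the $600$-cell being antipodal. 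Completing these steps reproves the theorem, the only inputs beyond the machinery of this paper being the list of inner products of the $600$-cell and the vanishing pattern of $\sum_{x,y} P^4_k$, both of which come from its description via the binary icosahedral group.
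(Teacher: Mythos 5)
Your proposal is correct and follows essentially the same route as the paper: the spherical analogue of Corollary~\ref{corollary:reduce} with the multiset $T$ consisting of $-1$ once and each of the other seven inner products twice (so $M=15$), two-point linear programming certificates for each $f_m$, and special treatment of the top case(s) where the degree-$12$ Gegenbauer coefficient --- the one degree below $20$ at which the $600$-cell fails to be a design --- must be forced to vanish in the auxiliary polynomial. The only cosmetic difference is that the paper invokes the construction of Section~7 of \cite{CK} wholesale for $f_0,\dots,f_{13}$ and modifies it only for $f_{14}$ (using a degree-$18$ auxiliary polynomial agreeing with $f_{14}$ to order $2$ at $-1$), whereas you rebuild the certificates from scratch and flag $f_{12},f_{13},f_{14}$ as the cases needing the degree-$12$ correction.
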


\begin{proof}[Sketch of proof]
Section~7 of \cite{CK} describes a construction that applies two-point
bounds for energy.  It gives a sharp bound for every absolutely
monotonic potential function, and in any given case it is easy to check
that it works, but the proof that it works for all of them is fairly
elaborate.  We will use Corollary~\ref{corollary:reduce} (or,
technically, its analogue for spheres) to reduce the problem to a
finite calculation.

There are eight inner products between distinct points in the
$600$-cell, namely $-1$, $0$, $\pm 1/2$, and $(\pm 1 \pm \sqrt{5})/4$.
Call them $t_1,\dots,t_8$ with $ -1 = t_1 < t_2 < \dots < t_8. $ Let
$T$ be the multiset $\{t_1,t_2,t_2,t_3,t_3,\dots,t_8,t_8\}$ (note that
$\mult_T(t_1)=1$).  Now Corollary~\ref{corollary:reduce} reduces the
problem to a finite number of cases. In each case but the last, it is
not hard to check that the approach in Section~7 of \cite{CK} proves a
sharp bound, but it fails in the last case, namely
$$
f(t) = (t-t_1)(t-t_2)^2\dots (t-t_7)^2 (t-t_8).
$$
The failure does not come as a surprise, because of course this
function is not absolutely monotonic.  Nevertheless, the approach can
be salvaged in this case.  In the language used at the top of page~127
in \cite{CK}, one should choose an auxiliary polynomial of degree $18$
that agrees with $f$ to order $2$ at $-1$ (in addition to the other
conditions listed in \cite{CK}). It is then straightforward to check
that this polynomial proves a sharp bound for $\widetilde{E}_f$.
\end{proof}

This proof is still not quite conceptual, because it relies on a fair
amount of computation, but it requires less computation and is
substantially simpler than the proof in \cite{CK}.  The same approach
of using the spherical analogue of Corollary~\ref{corollary:reduce}
works for all the universal optima studied in \cite{CK}; in fact, it is
in a sense equivalent to the approach used there, but this method was
not made explicit in \cite{CK} and was not applied to the $600$-cell.

\section{Computer calculations}
\label{appendix:data}

Computer algebra files for checking the proof of
Theorem~\ref{theorem:univopt} can be downloaded as part of the source
files for this paper in the \texttt{arXiv.org} e-print archive, where
it is available as \texttt{arXiv:1103.0485}. In addition to the \LaTeX\
source for the paper, downloading the source files produces eight plain
text files: \texttt{definitions.txt}, \texttt{optimal.txt},
\texttt{unique.txt}, and \texttt{data1.txt} through \texttt{data5.txt}.
These files contain code for the PARI/GP computer algebra system
\cite{PARI}, which is freely available.  The code is not especially
elaborate and should be straightforward to adapt to other systems.
The files are also available from
\url{http://dx.doi.org/10.1090/S0894-0347-2012-00737-1}.

Each of the data files contains the exact values of $c$,
$F_0,\dots,F_5$, and $M$ for one of the five potential functions
considered in the proof of Theorem~\ref{theorem:univopt}. Specifically,
it defines a constant $c$ and a list $L$ of six matrices, namely
$F_0,\dots,F_5$, and $M$.  Note that in PARI format, \texttt{[a,b;c,d]}
denotes the matrix
$$
\begin{pmatrix}
a & b\\
c & d
\end{pmatrix}.
$$
In the proof of the theorem, we did not specify the order of the
monomials $u^i v^j t^k$ in the vector $z(u,v,t)$, or equivalently the
order of the rows and columns of $M$; in the data files, we order them
lexicographically by their exponent vectors $(i,j,k)$.

The file \texttt{definitions.txt} defines the ultraspherical
polynomials and the matrices $\widehat{S}_k^n$ and $\widehat{T}_k^n$.
These definitions are read by the remaining two files.

The file \texttt{optimal.txt} completes the proof of universal
optimality. Specifically, it checks that all the matrices are symmetric
and positive semidefinite, it checks the sum of squares decomposition
$$
\frac{f_0\big(u^2\big)+f_0\big(v^2\big)+f_0\big(t^2\big)}{3} - H(u,v,t)
= \langle z(u,v,t), M z(u,v,t) \rangle,
$$
and it checks that the bound is sharp.  To show that a $d \times d$
matrix is positive semidefinite, it computes the characteristic
polynomial $p(x)$ and verifies that $(-1)^d p(-x)$ has no negative
coefficients. All the calculations are done using exact rational
arithmetic, so the resulting proof is rigorous.

Finally, the file \texttt{unique.txt} completes the proof of uniqueness
and thus of Theorem~\ref{theorem:univopt}.  It finds all $(u,v,t)$ such
that
$$
\frac{f\big(u^2\big)+f\big(v^2\big)+f\big(t^2\big)}{3} =
H(u,v,t),
$$
and it solves a system of simultaneous equations to determine how many
times such a triple must occur in an optimal configuration (i.e., to
compute the numbers $N_1,\dots,N_5$).  It is convenient to use the
number $1/\sqrt{3}$, but we must avoid floating point arithmetic to
keep round-off error from becoming a problem.  To do so, we use
polynomials in a variable $s$ and work modulo $s^2-1/3$. As above, all
the calculations are then carried out exactly.

\section*{Acknowledgements}

We thank Christine Bachoc, Abhinav Kumar, and Frank Vallentin for
helpful discussions and Nathan Kaplan and the anonymous referees for
their feedback on the manuscript.


\end{document}